\title{A generic adaptive restart scheme with applications to saddle point algorithms}
\author{%
  Oliver Hinder \\
  Google Research, University of Pittsburgh \\
  \texttt{ohinder@pitt.edu} \\
  \And
  Miles Lubin \\
  Google Research \\
  \texttt{mlubin@google.com} \\
}
\newcommand{\R}{{\bf R}}
\newcommand{\N}{{\bf N}}
\newcommand{\e}{{e}}
\newcommand{\argmin}{\mathop{\rm argmin}}
\newcommand{\argmax}{\mathop{\rm argmax}}
\newcommand{\sign}{\mathop{\bf sign}}
\newcommand{\ball}[2]{B_{#1}(#2)}
\newcommand{\backtrackingfactor}{\eta}
\newtheorem{theorem}{Theorem}
\newtheorem*{theorem*}{Theorem}
\newtheorem{lemma}{Lemma}
\newtheorem{fact}{Fact}
\newtheorem{definition}{Definition}
\newtheorem{remark}{Remark}
\newtheorem{corollary}{Corollary}
\newtheorem{assumption}{Assumption}
\newcommand{\callGenericAdaptiveRestartScheme}[1]{\hyperref[function:GenericAdaptiveRestartScheme]{\GenericAdaptiveRestartScheme}{#1}}
\newcommand{\callGenericIterativeAlgorithm}[1]{\hyperref[function:GenericIterativeAlgorithm]{\GenericIterativeAlgorithm{#1}}}
\newcommand{\callAdaptiveRestartPDHG}[1]{\hyperref[function:AdaptiveRestartPDHG]{\AdaptiveRestartPDHG}{#1}}
\newcommand{\callAdaptiveRestartAGD}[1]{\hyperref[function:AdaptiveRestartAGD]{\AdaptiveRestartAGD}{#1}}
\newcommand{\callAdaptiveRestartExtragradient}[1]{\hyperref[function:AdaptiveRestartExtragradient]{\AdaptiveRestartExtragradient}{#1}}
\newcommand{\SmoothFunction}{{a}}  \newcommand{\ProxFunction}{{b}}
\newcommand{\PdhgStepSize}{{\gamma}} 
\newcommand{\PrimalStepSize}{{\gamma_x}}  \newcommand{\DualStepSize}{{\gamma_y}}
\newcommand{\InitialTau}[1]{\tau_{\text{init}}}
\newcommand{\outerIterate}[0]{v}
\begin{document}

\maketitle
\begin{abstract}
    We provide a simple and generic adaptive restart scheme for convex optimization that is able to achieve worst-case bounds matching (up to constant multiplicative factors) optimal restart schemes that require knowledge of problem specific constants. 
    The scheme triggers restarts whenever there is sufficient reduction of a distance-based potential function.
    This potential function is always computable.
    
    We apply the scheme to obtain the first adaptive restart algorithm for saddle-point algorithms including primal-dual hybrid gradient (PDHG) and extragradient.
    The method improves the worst-case bounds of PDHG on bilinear games, and numerical experiments on quadratic assignment problems and matrix games demonstrate dramatic improvements for obtaining high-accuracy solutions.
    Additionally, for accelerated gradient descent (AGD), this scheme obtains a worst-case bound within 60\% of the bound achieved by the (unknown) optimal restart period when high accuracy is desired.
    In practice, the scheme is competitive with the heuristic of O'Donoghue and Candes \cite{o2015adaptive}.
    
\end{abstract}

\section{Introduction}

Periodically re-initializing an algorithm from its output, known as restarting, is a popular method for generically speeding up optimization algorithms.
More precisely, given an algorithm with a sublinear convergence rate and an additional regularity assumption (e.g., strong convexity) periodically restarting the algorithm can improve its convergence from sublinear to linear \cite{nemirovskii1985optimal}.
The difficulty is in choosing the length of the restart period.
One can resolve this issue by running a grid search on a log-scale to find the best restart period \cite{roulet2017sharpness}, but this can be many times more computationally intensive than running with a single restart period.
Furthermore, a restart interval length that changes as the algorithm runs, adapting to the local curvature of a function, may outperform a fixed restart period.
Addressing these two issues motivates the study of adaptive restart schemes.

Adaptive restarting was pioneered by \citet{o2015adaptive} for Nesterov's accelerated gradient descent (AGD) \cite{nesterov1983method}. Their paper suggests restarting AGD whenever the function value increases. They observe that this heuristic is competitive with selecting the best restart period via hyperparameter search. They also provide a theoretical analysis showing that for quadratics, as the number of iterations tends to infinity, the worst-case performance of their method is almost optimal. Despite its success in practice, one drawback of O'Donoghue and Candes's \cite{o2015adaptive} restart scheme is that its theory only applies to quadratics. A large number of papers have followed this work by trying to develop restart schemes that avoid hyperparameter searches while obtaining stronger guarantees. These schemes either miss the optimal bound by log factors \cite{lin2014adaptive,nesterov2013gradient,liu2017adaptive}, or are based on maintaining a monotonely increasing minimum restart interval length which may limit adaptivity \cite{alamo2019restart, fercoq2019adaptive}.

Furthermore, there is little study of restart schemes in the context of saddle point algorithms.
\citet{chambolle2011first} show that a restart scheme can be used to improve the sublinear converge rate of primal-dual hybrid gradient (PDHG) under a uniform convexity assumption. \citet{zhao2020optimal} explores restart schemes for stochastic PDHG.
However, these papers neither offer adaptive schemes nor provide linear convergence guarantees.
There are many papers showing the linear convergence of saddle point algorithms without restarts \cite{tseng1995linear,hong2017linear,eckstein1989splitting,boley2013local,lu2020s}. For example, in the bilinear setup it is known that the last iterate of PDHG  \cite{du2018linear} and extragradient \cite{mokhtari2019unified,zhangconvergence} converge linearly to the optimal solution.
However, all these results obtain suboptimal convergence guarantees, even in the simple bilinear setup.

\paragraph{Our contributions}
\begin{enumerate}
    \item We provide a simple adaptive restart scheme where restarts are triggered using a distance based potential function. In a general setup we precisely characterize the linear convergence rate of this scheme.
    \item We apply this scheme to specific algorithms. For PDHG it obtains the optimal convergence rate for bilinear games. To the best of our knowledge this provides the first example of this rate being achieved by a saddle point algorithm. For AGD on smooth strongly convex functions it yields the optimal convergence rate up to small constant factors.
    \item We run experiments with this restart scheme. For PDHG, on matrix games and linear programming relaxations of quadratic assignment problems, our scheme dramatically improves on the ability of ``vanilla'' PDHG to find high accuracy solutions. For AGD, on regularized logistic regression and LASSO problems, our scheme is competitive with the scheme of O'Donoghue and Candes \cite{o2015adaptive}. 
\end{enumerate}

 \paragraph{Notation} 
 Let $\R$ denote the set of real numbers, $\R^{+}$ the positive real numbers, and $\N$ the set of natural numbers (starting from one).
 Let $\ln(\cdot)$ and $\e$ refer to the natural log and exponential respectively. 
 Let $\lambda_{\min}( \cdot )$ denote the minimum eigenvalue of a matrix. 
 Let $X \subseteq \R^{n}$ and $Y \subseteq \R^{m}$ for $n, m \in \N$ be closed convex sets and denote $W = X \times Y$ where for all $w \in W$ we denote $w_x \in X$ as the primal component and $w_y \in Y$ as the dual component.
 Let $\| \cdot \|$ be an arbitrary norm on elements of $W$ and define $\ball{r}{w} := \{ \hat{w} \in W : \| w - \hat{w} \| \le r \}$. If $\hat{W}$ is a set then we define $\| \hat{W} - w \| = \inf_{\hat{w} \in W} \| \hat{w} - w \|$.

\section{Our adaptive restart scheme}

\begin{algorithm}[htpb]\label{function:GenericAdaptiveRestartScheme}
 \Fn{\GenericAdaptiveRestartScheme{$\outerIterate_0$}}{
  \For{$i = 1, \dots, \infty$}{
 $w_{i}^{0}, s_{i}^{0} \gets $ \InitializeAlgorithm{$\outerIterate_{i-1}$} \;
$t \gets 0$\;
\Repeat{restart condition \eqref{eq:our-restart-condition} holds}{
$t \gets t + 1$\;
$w_{i}^{t}, s_{i}^{t} \gets$ \OneStepOfAlgorithm{$w_{i}^{t-1}, s_{i}^{t-1}$} \;
}
$\tau_i \gets t$, $\outerIterate_i \gets w_{i}^{t}$ \;
}
}
\end{algorithm}

\callGenericAdaptiveRestartScheme{} presents our restart scheme. We use $i$ to index the restart epochs, $\tau_i$ to represent the length of the $i$th restart epoch, and $\outerIterate_i$ to be the point produced by the algorithm at the end of the $i$th restart epoch. Within each restart epoch $i$, $t$ keeps track of the number of iterations. The object $s_{i}^{t}$ is the internal state of the algorithm. For example, for AGD it would include momentum information; for PDHG it contains the current iterate and $t$. The vector $w_{i}^{t}$ is what the algorithm outputs at iteration $t$ to obtain its sublinear convergence guarantees. For example, for primal dual hybrid gradient this would be the running average of the iterates since the last restart.

\paragraph{Our restart condition.} Let $\beta \in (0,1)$ be an instance-independent parameter (e.g., $\beta = 1/2$) and $\phi : \R \rightarrow \R$ a instance-independent function. 
For $i > 1$, restart if
\begin{flalign}\label{eq:our-restart-condition}
\frac{\| w^{t}_i - \outerIterate_{i-1} \|}{\phi(t)} \le 
\beta \frac{\| \outerIterate_{i-1} - \outerIterate_{i-2} \|}{\phi(\tau_{i-1})};
\end{flalign}
for $i = 1$, for simplicity we suggest restarting when $t = 1$, but for the analysis we allow the initial restart length $\tau_1$ to be arbitrarily chosen.
As detailed in Section~\ref{sec:theory-to-specific}, the function $\phi$ is selected to correspond to the sublinear convergence bound of the algorithm. For example, AGD convergences proportional to $1/(t+1)^2$, and therefore $\phi(t) := (t+1)^2$. Similarly, PDHG converges proportional to $1/t$, and therefore $\phi(t) := t$.
If $\| w^{t}_i - \outerIterate_{i-1} \|$ is bounded and
$\phi(t) \rightarrow \infty$ as 
$t \rightarrow \infty$ then for $t$ sufficiently large a restart will be triggered.

There are many other possible restart conditions found in literature. For example, a popular restart scheme for AGD is to restart when $f(w_i^{t}) - \inf_{w \in W} f(w) \le \beta \left(f(\outerIterate_{i-1}) - \inf_{w \in W} f(w) \right)$.
In other words, when the potential function $f(w_i^{t}) - \inf_{w \in W} f(w)$ is reduced by a constant factor.
Unfortunately, this assumes knowledge of the minimum of $f$, which is rarely available. Our restart scheme can also be viewed as decreasing the potential function 
\begin{flalign}\label{eq:potential-function}
\Upsilon(\outerIterate_{i-1}, t) := \frac{\| w_{i}^t - \outerIterate_{i-1} \|}{\phi(t)},
\end{flalign}
by at least a $\beta$-factor each iteration.
However, the benefit of \eqref{eq:potential-function} over $f(w_i^t) - \inf_{w \in W} f(w)$ is that it is always computable.
The idea behind choosing this as a potential function is that for many algorithms as \eqref{eq:potential-function} decreases the distance to optimality decreases. This is made explicit later in Lemma~\ref{lemma:bound-r-star-new-by-r-old}.

\section{Assumptions required for adaptive restarts}\label{assume:required-for-adaptive}

This section defines a broad set of conditions under which we will be able to adaptively restart an algorithm to obtain linear convergence. The main property we require is that \emph{without} restarts the algorithm reduces the \emph{localized duality gap} which is defined as
\begin{flalign}\label{def:Delta-r}
\Delta_r(w) := \sup_{(x,y) \in \ball{r}{w}} f(w_x, y) - f(x, w_y)
\end{flalign}
where $r \in \R^{+}$, $w_x$ denotes the primal iterate and $w_y$ the dual iterate. We prepend the word `localized' to `duality gap' because the radius $r$ may be significantly smaller than the distance to optimality. If the radius is larger than the distance to optimality then it represents a valid duality gap. Let $W^{*} := \{ w \in W : \Delta_r(w) = 0, \forall r \in \R^{+} \}$ be the set of optimal solutions.
Note if we are interested in only minimizing a function then we can let $W = X$ and \eqref{def:Delta-r} simplifies to
$\Delta_r(w) = f(w) - \inf_{\tilde{w} \in \ball{r}{w}} f(\tilde{w})$.

\begin{assumption}\label{assume-Delta-r-marginal-gain}
Suppose that for all $w \in W$ and $r_a \in (0,\infty), r_b \in [0,\infty)$ the inequality
$\Delta_{r_b}(w) \le \max\{1, r_b / r_a \} \Delta_{r_a}(w)$ holds.
\end{assumption}

Assumption~\ref{assume-Delta-r-marginal-gain} is extremely mild, and as Lemma~\ref{lem:Delta-r-marginal-gain} demonstrates it suffices for $f$ to be convex-concave for it to hold. The proof of Lemma~\ref{lem:Delta-r-marginal-gain} appears in Appendix~\ref{sec:proof-of:lem:Delta-r-marginal-gain}.

\begin{lemma}
\label{lem:Delta-r-marginal-gain}
Let $X$ and $Y$ be closed convex sets.
Also, suppose $f(x,y)$ is continuous, convex in $x$ for all $y \in Y$, and concave in $y$ for all $x \in X$ then Assumption~\ref{assume-Delta-r-marginal-gain} holds.
\end{lemma}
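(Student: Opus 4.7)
The plan is to split on whether $r_b \le r_a$ or $r_b > r_a$, and in the interesting case use convex combinations to transfer a candidate in $\ball{r_b}{w}$ to one in $\ball{r_a}{w}$ at the cost of a factor $r_a/r_b$.

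First, observe that when $r_b \le r_a$ we have $\ball{r_b}{w} \subseteq \ball{r_a}{w}$, so the supremum defining $\Delta_{r_b}(w)$ is taken over a smaller set and thus $\Delta_{r_b}(w) \le \Delta_{r_a}(w) = \max\{1, r_b/r_a\} \Delta_{r_a}(w)$. (The $r_b=0$ case is trivial since $\Delta_0(w) = 0$.)

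Now suppose $r_b > r_a > 0$ and set $\lambda \defeq r_a/r_b \in (0,1)$. Fix any $(x,y) \in \ball{r_b}{w}$ and define the convex combination
\[
(x', y') \defeq (1-\lambda) (w_x, w_y) + \lambda (x, y).
\]
Since $X$ and $Y$ are convex, $(x', y') \in W$, and because the norm on $W$ is homogeneous, $\|(x',y') - w\| = \lambda \|(x,y) - w\| \le \lambda r_b = r_a$, so $(x',y') \in \ball{r_a}{w}$. Using concavity of $f(w_x, \cdot)$ and convexity of $f(\cdot, w_y)$,
\begin{align*}
f(w_x, y') &\ge (1-\lambda) f(w_x, w_y) + \lambda f(w_x, y), \\
f(x', w_y) &\le (1-\lambda) f(w_x, w_y) + \lambda f(x, w_y).
\end{align*}
Subtracting yields $f(w_x, y') - f(x', w_y) \ge \lambda \bigl[f(w_x, y) - f(x, w_y)\bigr]$.

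Since $(x',y') \in \ball{r_a}{w}$, the left-hand side is bounded above by $\Delta_{r_a}(w)$, hence $f(w_x, y) - f(x, w_y) \le \lambda^{-1} \Delta_{r_a}(w) = (r_b/r_a)\, \Delta_{r_a}(w)$. Taking the supremum over $(x,y) \in \ball{r_b}{w}$ gives the desired inequality. There is no real obstacle here; the only thing to be careful about is that the interpolation must stay inside $W$ (handled by convexity of $X$ and $Y$) and inside $\ball{r_a}{w}$ (handled by homogeneity of the norm), and that we do not need $f$ to be smooth — continuity plus the convex-concave structure is enough.
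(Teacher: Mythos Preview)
Your proof is correct and follows essentially the same interpolation idea as the paper: scale the candidate point toward $w$ by the factor $\lambda = r_a/r_b$ and use convexity/concavity to pick up the factor $\lambda$. The only cosmetic difference is that the paper interpolates the $x$- and $y$-components separately and then adds the two resulting inequalities, whereas you interpolate $(x,y)$ jointly and obtain the bound in one step; your version is slightly cleaner and works verbatim for any norm on $W$.
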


\begin{algorithm}[htpb]\label{function:GenericIterativeAlgorithm}
  \Fn{\GenericIterativeAlgorithm{$\outerIterate$}}{
 $w^0, s^0 \gets $ \InitializeAlgorithm{$\outerIterate$} \;
\For{$t = 1, \dots, \infty$}{
$w^{t}, s^{t} \gets$ \OneStepOfAlgorithm{$w^{t-1}, s^{t-1}$} \;
}
}
\end{algorithm}

Consider a generic iterative algorithm described in \callGenericIterativeAlgorithm{$\outerIterate$}. To prove our results we need $\Delta_r(w)$ to be reduced in a predictable manner as per Assumption~\ref{assume:reduce-potential-function}.

\begin{assumption}\label{assume:reduce-potential-function}
Consider the sequence $\{w^t\}_{t=0}^{\infty}$ generated by \callGenericIterativeAlgorithm{$\outerIterate$}. 
We assume there exists a (known) constant $\beta \in (0,1)$ and a (known) function $\phi : \R^{+} \rightarrow \R^{+}$ that is strictly increasing log-concave and unbounded from above. Furthermore assume there exists an (unknown)  constant $C \in (0, \infty)$ such that
for all $\outerIterate \in W$ and $t \in \N$, $\Delta_{\beta r} (w^t) \le
\frac{C (1 + \beta)^2 r^2 }{\phi(t)}$ 
where $r = \| w^{t} - \outerIterate \|$. 
\end{assumption}

Assumption~\ref{assume:reduce-potential-function} is strongly related to the standard sublinear convergence bound for an algorithm. For example, as we show in Section~\ref{sec:theory-to-specific}, for AGD $\beta$ takes any value in $(0,1)$, $C = 2 L$ and $\phi(t) = (t+1)^2$ where $L$ is the smoothness constant of $f$. We note that if $\phi$ takes the form $\phi(k) = (k+c_1)^{c_2}$ for constants $c_1 \ge 0$ and $c_2 > 0$ then $\phi$ is a strictly increasing log concave function.

\begin{assumption}
\label{assume:contraction}
Consider the sequence $\{w^t\}_{t=0}^{\infty}$ generated by \callGenericIterativeAlgorithm{$\outerIterate$}. 
Suppose that there exists some monotone decreasing function $Q : \R \rightarrow \R^{+}$ that satisfies $\| w^{t} - w^{*} \| \le Q(t) \| \outerIterate - w^{*} \|$ for all $w^{*} \in W^{*}$, $t \in \N$ and $\outerIterate \in W$.
\end{assumption}

The choice of the function $Q$ in Assumption~\ref{assume:contraction} affects the final linear convergence bound. To achieve the tightest linear convergence bound one should aim to select the smallest (valid) $Q(t)$ possible. It may even be a constant as is the case for PDHG.
We further need the localized duality gap to satisfy an error bound property, this is the purpose of Assumption~\ref{assume:error-bound}. 

\begin{assumption}\label{assume:error-bound}
There exists some constant $\theta \in (0,\infty)$ such that for all $w \in W$, if $r^{*} = \| W^{*} - w \|$ then $\theta (r^{*})^2 \le \Delta_{r^{*}}(w)$.
\end{assumption}

\begin{remark}\label{rem:quadratic-growth}
Suppose that $W = X$ and $f(w) \ge \frac{\alpha}{2} \| w - W^{*} \|^2 + \inf_{\hat{w} \in W} f(\hat{w})$ then Assumption~\ref{assume:error-bound} holds with $\theta = \alpha / 2$. This condition is known as quadratic growth and is weaker than $\alpha$-strong convexity of $f$.
\end{remark}

\newcommand{\sigmaMin}[0]{\sigma}

\begin{lemma}\label{lem:minimum-nonsingular-value}
Suppose $W = \R^{n} \times \R^{m}$ for $n, m \in \N$, and that $f(x,y) = c^T x + y^T A x + b^T y$ has a saddle point $(x^{*}, y^{*})$. Let $\sigmaMin{}$ denote the minimum nonzero singular value of $A$.
Then Assumption~\ref{assume:error-bound} holds where $\| \cdot \|$ is the Euclidean norm and $\theta = \sigmaMin{}$.
\end{lemma}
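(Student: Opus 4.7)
The plan is to compute the localized duality gap in closed form and then exploit that the projection onto $W^\ast$ lands in the orthogonal complement of the nullspaces of $A$ and $A^T$.

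First, I would fix a saddle point $w^\ast=(x^\ast,y^\ast)$ (to be chosen later as the closest point in $W^\ast$ to $w$) and write out the optimality conditions: $c+A^T y^\ast=0$ and $Ax^\ast+b=0$. Substituting these into $f(w_x,y)-f(x,w_y)$ and rearranging, the linear-in-$c,b$ terms cancel and one obtains the clean identity
\begin{equation*}
f(w_x,y)-f(x,w_y) \;=\; (y-w_y)^T A\,(w_x-x^\ast) \;-\; (x-w_x)^T A^T(w_y-y^\ast),
\end{equation*}
so writing $\delta_x:=w_x-x^\ast$, $\delta_y:=w_y-y^\ast$ and using the substitutions $u:=y-w_y$, $v:=x-w_x$, the gap restricted to $B_{r^\ast}(w)$ becomes $\sup\{u^T A\delta_x - v^T A^T\delta_y:\|u\|^2+\|v\|^2\le (r^\ast)^2\}$.

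Second, I would evaluate this supremum. Since the objective is linear in $(u,v)$ and the constraint is the Euclidean ball, two applications of Cauchy--Schwarz (first pointing $u$ along $A\delta_x$ and $-v$ along $A^T\delta_y$, then optimizing the split of the budget $(r^\ast)^2$ between $\|u\|^2$ and $\|v\|^2$) give the exact value
\begin{equation*}
\Delta_{r^\ast}(w) \;=\; r^\ast\,\sqrt{\|A\delta_x\|^2+\|A^T\delta_y\|^2}.
\end{equation*}

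Third, I would choose $w^\ast$ to be the Euclidean projection of $w$ onto $W^\ast$. The saddle-point set factors as $W^\ast=(\bar x+\ker A)\times(\bar y+\ker A^T)$ for any particular saddle point $(\bar x,\bar y)$, so the projection residual satisfies $\delta_x\in(\ker A)^\perp=\mathrm{range}(A^T)$ and $\delta_y\in(\ker A^T)^\perp=\mathrm{range}(A)$. On these subspaces $A$ and $A^T$ act as injective maps with smallest singular value $\sigmaMin{}$, so $\|A\delta_x\|\ge\sigmaMin{}\|\delta_x\|$ and $\|A^T\delta_y\|\ge\sigmaMin{}\|\delta_y\|$. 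Combining,
\begin{equation*}
\Delta_{r^\ast}(w)\;\ge\; r^\ast\sqrt{\sigmaMin{}^2(\|\delta_x\|^2+\|\delta_y\|^2)} \;=\; \sigmaMin{}\,(r^\ast)^2,
\end{equation*}
which is the required bound with $\theta=\sigmaMin{}$.

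The only delicate step is the third one: one must justify that the projection onto $W^\ast$ (rather than an arbitrary saddle point) produces residuals in the complements of the relevant kernels. This follows from the orthogonality characterization of the Euclidean projection onto an affine subspace, but it is the key non-routine input; the rest is an algebraic computation plus Cauchy--Schwarz.
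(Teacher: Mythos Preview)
Your argument is correct. The algebraic identity in your first step checks out (substituting $c=-A^T y^\ast$ and $b=-Ax^\ast$ indeed collapses the gap to $(y-w_y)^T A\delta_x-(x-w_x)^T A^T\delta_y$), the Cauchy--Schwarz evaluation of the supremum over the Euclidean ball is exact, and the projection step is justified because $W^\ast$ is a product of two affine subspaces, so the residuals lie in $(\ker A)^\perp$ and $(\ker A^T)^\perp$ respectively.

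The paper arrives at the same key identity $\Delta_{r^\ast}(w)=r^\ast\sqrt{\|A\delta_x\|^2+\|A^T\delta_y\|^2}$ but by a different route: it first treats the special case $b=c=0$ with $A$ diagonal (where the computation is explicit), and then reduces the general case to this one via the singular value decomposition $A=U\Sigma V^T$ together with the shift $\bar x=V^T(x-x^\ast)$, $\bar y=U^T(y-y^\ast)$, checking that both the saddle-point structure and Euclidean distances are preserved under this orthogonal change of variables. Your approach is more direct---you handle general $b,c$ and general $A$ in one pass by using the optimality conditions to eliminate $b,c$ and the affine-projection characterization to land in the kernel complements, never needing to diagonalize. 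The paper's SVD reduction is more hands-on and makes the diagonal structure visible, but at the cost of an extra reduction step; your argument is shorter and arguably cleaner.
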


The proof of Lemma~\ref{lem:minimum-nonsingular-value} appears in Section~\ref{sec:proof-of:lem:minimum-nonsingular-value}. Error bounds conditions are a well-studied topic \cite{hoffman1952approximate,lojasiewicz1961probleme,drusvyatskiy2018error,fabian2010error}.
We feel our condition is natural since it captures both strongly convex functions and bilinear games, two important areas where linear convergence is possible.

\section{Proof of convergence for generic restart scheme}\label{sec:proof-of-convergence}

For the following discussions \textbf{we define the condition number} $\hat{\kappa} := \frac{C}{\theta}$,
where the constants $C$ and $\theta$ are those from Assumption~\ref{assume:reduce-potential-function} and \ref{assume:error-bound} respectively.

Lemma~\ref{lemma:bound-r-star-new-by-r-old} establishes if Assumptions~\ref{assume-Delta-r-marginal-gain}, \ref{assume:reduce-potential-function}, and \ref{assume:error-bound} hold then \eqref{eq:potential-function} is a valid potential function in the sense that for fixed $t$ if $\Upsilon(\outerIterate_{i-1}, t) = \frac{\| w^t_i - \outerIterate_{i-1} \|}{\phi(t)} \rightarrow 0$ as $i \rightarrow \infty$ then $\| W^{*} - w_{i}^t \| \rightarrow 0$.

\begin{lemma}\label{lemma:bound-r-star-new-by-r-old}
Consider the sequence $\{w^t\}_{t=0}^{\infty}$ generated by \callGenericIterativeAlgorithm{$\outerIterate$} for $\outerIterate \in W$.
Suppose $t \in \N$, and that Assumptions~\ref{assume-Delta-r-marginal-gain}, \ref{assume:reduce-potential-function}, and \ref{assume:error-bound} hold. Then,
\begin{flalign*}
\| W^{*} - w^{t} \| &\le
\begin{cases}
\frac{(1 + \beta)^2}{\beta} \frac{\hat{\kappa}  }{\phi(t)} \| w^{t} - \outerIterate \| & \| W^{*} - w^{t} \| \ge \beta \| w^{t} - \outerIterate \|  \\
(1 + \beta) \sqrt{ \frac{\hat{\kappa}}{\phi(t)} } \| w^{t} -  \outerIterate \| & \text{otherwise.}
\end{cases}
\end{flalign*}
\end{lemma}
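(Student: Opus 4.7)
The plan is to use the three assumptions in a short chain, splitting into the two cases exactly as stated. Throughout, write $r := \|w^t - \outerIterate\|$ and $r^* := \|W^* - w^t\|$. The goal is to bound $r^*$ by feeding the error bound ($\theta(r^*)^2 \le \Delta_{r^*}(w^t)$, Assumption~\ref{assume:error-bound}) into the convergence bound $\Delta_{\beta r}(w^t) \le C(1+\beta)^2 r^2 / \phi(t)$ (Assumption~\ref{assume:reduce-potential-function}), using Assumption~\ref{assume-Delta-r-marginal-gain} to bridge the gap between the radius $r^*$ that appears in the error bound and the radius $\beta r$ that appears in the convergence bound.

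For the ``otherwise'' branch, where $r^* < \beta r$, Assumption~\ref{assume-Delta-r-marginal-gain} applied with $r_b = r^* \le \beta r = r_a$ reduces to monotonicity, $\Delta_{r^*}(w^t) \le \Delta_{\beta r}(w^t)$. Chaining this with Assumptions~\ref{assume:reduce-potential-function} and~\ref{assume:error-bound} gives
\[
\theta (r^*)^2 \;\le\; \Delta_{r^*}(w^t) \;\le\; \Delta_{\beta r}(w^t) \;\le\; \frac{C(1+\beta)^2 r^2}{\phi(t)},
\]
and taking square roots and using $\hat\kappa = C/\theta$ produces the second case of the bound.

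For the branch $r^* \ge \beta r$, I instead apply Assumption~\ref{assume-Delta-r-marginal-gain} with $r_b = r^*$ and $r_a = \beta r$, where now the ratio $r^*/(\beta r) \ge 1$ is the active term in the $\max$. This yields $\Delta_{r^*}(w^t) \le \tfrac{r^*}{\beta r}\Delta_{\beta r}(w^t)$. Combining with the error bound and the convergence bound gives
\[
\theta (r^*)^2 \;\le\; \frac{r^*}{\beta r} \cdot \frac{C(1+\beta)^2 r^2}{\phi(t)},
\]
and dividing through by $\theta r^*$ (which is positive, else the claim is trivial) yields the first case of the bound after substituting $\hat\kappa$.

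I do not expect any substantive obstacle: the only subtlety is noticing that Assumption~\ref{assume-Delta-r-marginal-gain} plays two distinct roles in the two branches (pure monotonicity when $r^* \le \beta r$, and the linear scaling $r^*/(\beta r)$ when $r^* \ge \beta r$), which is precisely why the lemma has a dichotomous statement with a $1/\phi(t)$ rate in one case and a $1/\sqrt{\phi(t)}$ rate in the other.
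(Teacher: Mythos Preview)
Your proposal is correct and follows essentially the same argument as the paper: chain Assumption~\ref{assume:error-bound}, then Assumption~\ref{assume-Delta-r-marginal-gain} with $r_b = r^*$ and $r_a = \beta r$, then Assumption~\ref{assume:reduce-potential-function}, and rearrange. The paper carries the $\max\{1, r^*/(\beta r)\}$ factor through a single chain and says ``rearranging gives the result,'' whereas you split into the two branches of the $\max$ up front; the content is identical.
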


\begin{proof}
We obtain,
\begin{flalign*}
\theta \| W^{*} - w^{t} \|^{2} &\le \Delta_{\| W^{*} - w^{t} \|} (w^t) & \text{by Assumption~\ref{assume:error-bound},} \\
&\le \max\left\{ 1, \frac{\| W^{*} - w^{t} \|}{\beta \| w^{t} - \outerIterate \|} \right\} \Delta_{\beta \| w^{t} - \outerIterate \|}(w^t) &  \text{by Assumption~\ref{assume-Delta-r-marginal-gain},} \\
&\le  \max\left\{ 1, \frac{\| W^{*} - w^{t} \|}{\beta \| w^{t} - \outerIterate \|} \right\} \frac{C (1 + \beta)^2 \| w^{t} - \outerIterate \|^{2}}{\phi(t)} & \text{ by Assumption~\ref{assume:reduce-potential-function}}.
\end{flalign*}
Rearranging to bound $\| W^{*} - w^{t} \|$ gives the result.
\end{proof}

Define $t^{*} \in [2, \infty)$ as any solution to
\begin{flalign}
\label{define:t-star}
\frac{\phi(t^{*} - 2)}{(1 + Q(t^{*}-2))^2} \ge \frac{ (1 + \beta)^2}{\beta^2} \hat{\kappa}.
\end{flalign}
Note a solution to  \eqref{define:t-star} must exist because $\phi(t)$ is unbounded from above and $Q(t)$ monotone decreasing.
The quantity $t^{*}$ is useful for summarizing our results. 
In particular, as detailed in the proof of Theorem~\ref{thm:main-result}, $t^{*}$ represents an upper bound on the restart interval length given $\tau_1 \le t^{*}$.
But first we prove Lemma~\ref{lem:decrease-distance-potential-function-advanced} which makes a statement on when restarts are triggered. 
The proof of Lemma~\ref{lem:decrease-distance-potential-function-advanced} appears in Section~\ref{app:proof-of:lem:decrease-distance-potential-function-advanced}.

\begin{lemma}\label{lem:decrease-distance-potential-function-advanced}
Consider \callGenericAdaptiveRestartScheme{}.
Suppose that Assumptions~\ref{assume-Delta-r-marginal-gain}, \ref{assume:reduce-potential-function}, \ref{assume:contraction}, and~\ref{assume:error-bound} hold.
If $t \in \N$ is such that $\phi(t) \ge \max\{ \sqrt{ \phi(t^{*}-2) \phi(\tau_{i-1}) }, \phi(t^{*}-2) \}$ then 
$\frac{\| w_i^t - \outerIterate_{i-1} \|}{\phi(t)} \le \beta \frac{\| \outerIterate_{i-1} - \outerIterate_{i-2} \|}{\phi(\tau_{i-1})}$, i.e., a restart is triggered.
\end{lemma}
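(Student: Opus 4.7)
The plan is to derive the restart inequality from the triangle inequality and Assumption~\ref{assume:contraction}, and then bound the distance to the optimal set using Lemma~\ref{lemma:bound-r-star-new-by-r-old} applied to the previous restart epoch.

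Concretely, first I pick a point $w^{*} \in W^{*}$ closest to $\outerIterate_{i-1}$ (or approximately closest, modulo an $\epsilon$ argument if $W^{*}$ is not closed) and set $r^{*} := \| W^{*} - \outerIterate_{i-1} \|$. Applying Assumption~\ref{assume:contraction} to the current epoch (which was initialized at $\outerIterate_{i-1}$) gives $\| w_i^{t} - w^{*} \| \le Q(t) r^{*}$, so the triangle inequality yields
\begin{flalign*}
\| w_i^{t} - \outerIterate_{i-1} \| \le (1 + Q(t)) \, r^{*}.
\end{flalign*}
Thus it suffices to prove $(1 + Q(t)) r^{*}/\phi(t) \le \beta \, \| \outerIterate_{i-1} - \outerIterate_{i-2} \|/\phi(\tau_{i-1})$.

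Second, I bound $r^{*}$ by applying Lemma~\ref{lemma:bound-r-star-new-by-r-old} to the previous epoch, which ran for $\tau_{i-1}$ iterations starting from $\outerIterate_{i-2}$ and produced $\outerIterate_{i-1}$. This gives a bound on $r^{*}$ in terms of $\| \outerIterate_{i-1} - \outerIterate_{i-2} \|$ that splits into two cases according to whether $r^{*} \ge \beta \| \outerIterate_{i-1} - \outerIterate_{i-2} \|$. In case 1 the inequality to verify reduces, after cancellation, to $\phi(t) \ge (1 + Q(t)) \frac{(1+\beta)^2}{\beta^2} \hat{\kappa}$; in case 2 it reduces, after squaring, to $\phi(t)^2 \ge (1 + Q(t))^2 \frac{(1+\beta)^2}{\beta^2} \hat{\kappa}\, \phi(\tau_{i-1})$.

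Third, I close both cases using the definition \eqref{define:t-star} of $t^{*}$ combined with monotonicity. Since $\phi$ is strictly increasing, the hypothesis $\phi(t) \ge \phi(t^{*}-2)$ forces $t \ge t^{*}-2$, and since $Q$ is monotone decreasing we get $1 + Q(t) \le 1 + Q(t^{*}-2)$. Substituting \eqref{define:t-star} then shows $\phi(t^{*}-2) \ge (1+Q(t^{*}-2))^2 \frac{(1+\beta)^2}{\beta^2} \hat{\kappa} \ge (1+Q(t)) \frac{(1+\beta)^2}{\beta^2} \hat{\kappa}$, settling case 1. For case 2, the hypothesis $\phi(t) \ge \sqrt{\phi(t^{*}-2)\phi(\tau_{i-1})}$ gives $\phi(t)^2 \ge \phi(t^{*}-2)\phi(\tau_{i-1})$, and the same substitution plus the $Q$-monotonicity bound finishes the job.

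The only delicate point I foresee is the case split from Lemma~\ref{lemma:bound-r-star-new-by-r-old}: one must be careful that the factor lost to squaring in case 2 (the $\sqrt{\cdot}$ branch) is exactly compensated by the geometric-mean condition $\phi(t) \ge \sqrt{\phi(t^{*}-2)\phi(\tau_{i-1})}$, and that the weaker condition $\phi(t) \ge \phi(t^{*}-2)$ by itself is enough only for the linear-in-$r^{*}$ branch. Everything else is routine algebra.
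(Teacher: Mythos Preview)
Your proposal is correct and follows essentially the same approach as the paper. The paper packages your triangle-inequality step (via Assumption~\ref{assume:contraction}) and the two-case application of Lemma~\ref{lemma:bound-r-star-new-by-r-old} into an intermediate lemma (Lemma~\ref{lemma:decrease-distance-potential-function}), and then separately verifies the hypothesis of that lemma from the condition on $\phi(t)$ using \eqref{define:t-star} and the monotonicity of $Q$; the logic is identical to what you have written, just factored differently.
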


\begin{theorem}\label{thm:main-result}
Let $\epsilon \in (0, 1)$ and suppose that Assumptions~\ref{assume-Delta-r-marginal-gain}, \ref{assume:reduce-potential-function}, \ref{assume:contraction}, and~\ref{assume:error-bound} hold. Consider the sequence $\{ \outerIterate_i \}_{i=0}^{\infty}$, $\{ \tau_i \}_{i=1}^{\infty}$ generated by \callGenericAdaptiveRestartScheme{$(\outerIterate_0)$} for $\outerIterate_0 \in W$. Then for
$$
n = \bigg\lceil \log_{1/\beta}\left( \frac{1 + Q(\tau_1)}{\epsilon} \max\left\{ 1, \frac{ \phi( t^{*})}{\phi(\tau_1)} \right\} \right) \bigg\rceil
$$
the inequality $\frac{\| W^{*} - \outerIterate_{n} \|}{\| W^{*} - \outerIterate_0 \|} \le \epsilon$ holds and 
$\sum_{i=1}^{n} \tau_i \le t^{*} n + 2 (\tau_1 - t^{*})^{+}$.
\end{theorem}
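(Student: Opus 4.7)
Proof Plan: I would prove the two assertions---the geometric decrease of $\|W^*-\outerIterate_n\|$ and the iteration-count bound $\sum \tau_i \le t^* n + 2(\tau_1-t^*)^+$---separately, combining the restart condition \eqref{eq:our-restart-condition} with Lemmas~\ref{lemma:bound-r-star-new-by-r-old} and \ref{lem:decrease-distance-potential-function-advanced}.

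For the iteration-count bound, Lemma~\ref{lem:decrease-distance-potential-function-advanced} implies that $\tau_i$ is at most the smallest integer $t$ with $\phi(t) \ge \max\{\sqrt{\phi(t^*-2)\phi(\tau_{i-1})},\, \phi(t^*-2)\}$. Log-concavity of $\phi$ gives $\sqrt{\phi(t^*-2)\phi(\tau_{i-1})} \le \phi(\frac{1}{2}(t^*-2+\tau_{i-1}))$, and strict monotonicity of $\phi$ together with $\lceil x\rceil \le x+1$ then yields $\tau_i \le \frac{1}{2}(t^*+\tau_{i-1})$ (the sub-case $\tau_{i-1} < t^*-2$ producing a sharper bound $\tau_i \le t^*-1$). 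Subtracting $t^*$ and taking positive parts gives the recursion $(\tau_i-t^*)^+ \le \frac{1}{2}(\tau_{i-1}-t^*)^+$; geometric summation yields $\sum_{i\ge 1}(\tau_i-t^*)^+ \le 2(\tau_1-t^*)^+$, and writing $\sum_{i=1}^n \tau_i = nt^* + \sum_{i=1}^n(\tau_i-t^*)$ produces the bound. A useful by-product is $\phi(\tau_i) \le \max\{\phi(\tau_1),\phi(t^*)\}$ for every $i$.

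For the distance bound, set $D_i:=\|\outerIterate_i-\outerIterate_{i-1}\|$. The restart condition evaluated at $t=\tau_i$ reads $D_i/\phi(\tau_i) \le \beta D_{i-1}/\phi(\tau_{i-1})$ for $i\ge 2$, which iterates to $D_n \le \beta^{n-1}\phi(\tau_n)D_1/\phi(\tau_1)$. The triangle inequality and Assumption~\ref{assume:contraction} applied in the first epoch yield $D_1 \le (1+Q(\tau_1))\|W^*-\outerIterate_0\|$. Applying Lemma~\ref{lemma:bound-r-star-new-by-r-old} at $w^t=\outerIterate_n,\, \outerIterate=\outerIterate_{n-1}$, and taking the maximum of its two case-bounds as a valid upper bound in either case, gives $\|W^*-\outerIterate_n\| \le K_n D_n$ with $K_n\phi(\tau_n) = \max\{(1+\beta)^2\hat\kappa/\beta,\, (1+\beta)\sqrt{\hat\kappa\,\phi(\tau_n)}\}$. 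The definition \eqref{define:t-star} of $t^*$ implies $\phi(t^*) \ge \phi(t^*-2) \ge (1+\beta)^2\hat\kappa/\beta^2$ (using $Q\ge 0$), which bounds the first term in the max by $\beta\phi(t^*)$; combining with $\phi(\tau_n)\le\max\{\phi(\tau_1),\phi(t^*)\}$ from the previous paragraph similarly bounds the second term by $\beta\max\{\phi(\tau_1),\phi(t^*)\}$. Chaining the inequalities produces
\[ \|W^*-\outerIterate_n\| \le \beta^n(1+Q(\tau_1))\max\{1,\phi(t^*)/\phi(\tau_1)\}\|W^*-\outerIterate_0\|, \]
and requiring the right-hand side to be at most $\epsilon\|W^*-\outerIterate_0\|$ and solving for $n$ produces the stated expression.

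The main obstacle will be the last step of algebraic bookkeeping: threading the defining inequality of $t^*$ through \emph{both} cases of Lemma~\ref{lemma:bound-r-star-new-by-r-old} so that the somewhat ungainly expression for $K_n\phi(\tau_n)$ collapses to $\beta\max\{\phi(\tau_1),\phi(t^*)\}$, which is precisely what allows the geometric factor $\beta$ to survive the cancellation with $\phi(\tau_n)/\phi(\tau_1)$. The log-concavity trick in the iteration-count argument is subtle but mechanical once spotted.
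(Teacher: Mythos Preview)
Your proposal is correct and follows essentially the same route as the paper: the iteration-count bound is derived exactly as in the paper's Lemma~\ref{lem:bound-large-t} (Lemma~\ref{lem:decrease-distance-potential-function-advanced} plus log-concavity yields $\tau_i \le \tfrac{1}{2}(\tau_{i-1}+t^*)$, then a geometric sum), and the distance bound matches the paper's Lemma~\ref{lem:t-hat-bound} (iterate the restart condition, then apply Lemma~\ref{lemma:bound-r-star-new-by-r-old} at epoch $n$, then use $\phi(\tau_n)\le\max\{\phi(\tau_1),\phi(t^*)\}$ and Lemma~\ref{lem:upper-bound-r-i-by-r-i-star} for $D_1$).

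The one place you make your life harder than necessary is the ``ungainly'' second case of Lemma~\ref{lemma:bound-r-star-new-by-r-old}. The paper sidesteps your bound $(1+\beta)\sqrt{\hat\kappa\,\phi(\tau_n)}$ entirely: in that case the \emph{defining condition} already reads $\|W^*-\outerIterate_n\| < \beta\,\|\outerIterate_n-\outerIterate_{n-1}\|$, which directly contributes the ``$1$'' inside the $\max$, giving $\|W^*-\outerIterate_n\|\le \beta\max\{1,\phi(t^*)/\phi(\tau_n)\}\,D_n$ without any further use of $t^*$. This removes precisely the algebraic bookkeeping you flagged as the main obstacle; your version does go through (since $\phi(t^*)\ge(1+\beta)^2\hat\kappa/\beta^2$ handles both terms), but the paper's shortcut is cleaner.
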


The proof of Theorem~\ref{thm:main-result} appears in Appendix~\ref{app:proof-of-thm:main-result}. \emph{Proof sketch}. To show that $\frac{\| W^{*} - \outerIterate_{n} \|}{\| W^{*} - \outerIterate_0 \|} \le \epsilon$ we use that the potential function is decreasing by $\beta$ at each iteration and that it upper bounds the distance to optimality (Lemma~\ref{lemma:bound-r-star-new-by-r-old}). We  carefully use Lemma~\ref{lem:decrease-distance-potential-function-advanced} to bound $\sum_{i=1}^{n} \tau_i$. 

\section{Theory applied to specific algorithms}\label{sec:theory-to-specific}

\begin{definition}
A function $f : X \rightarrow \R$ is strongly convex if for all $x \in X$, $\lambda_{\min}(\grad^2 f(x)) \ge \alpha$.
\end{definition}

\begin{definition}
A differentiable function $f : W \rightarrow \R$ is $L$-smooth if $\| \grad f(w) - \grad f(w') \|_2 \le L \| w - w' \|_2$ for all $w, w' \in W$.
\end{definition}

Fact~\ref{fact:saddle-point-conversion} will be useful for showing standard sublinear bounds imply Assumption~\ref{assume:reduce-potential-function}. In particular, it allows us to change the center of the ball in a bound from $\bar{w}$ to $w$.

\begin{fact}\label{fact:saddle-point-conversion} 
For any $\beta \in (0,1)$, $\bar{w}, w \in W$ with $r = \| w - \bar{w} \|$ we have
$\sup_{(x,y) \in \ball{\beta r}{w}} f(w_x,y) -  f(x,w_y) \le 
\sup_{(x,y) \in \ball{(1 + \beta) r}{\bar{w}}} f(w_x,y) -  f(x,w_y)$.
\end{fact}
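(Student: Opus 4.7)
The plan is to observe that the inequality reduces to a containment of balls via the triangle inequality, together with the monotonicity of suprema under set inclusion. The objective $f(w_x, y) - f(x, w_y)$ on both sides is the same function of the optimization variables $(x,y)$ (note that $w_x, w_y$ are fixed; only the domain of $(x,y)$ changes), so the only work is to show the left-hand domain is contained in the right-hand domain.

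First I would pick an arbitrary $(x,y) \in \ball{\beta r}{w}$, so by definition $\|(x,y) - w\| \le \beta r$. Then by the triangle inequality applied in the norm $\|\cdot\|$ on $W$,
\begin{equation*}
\|(x,y) - \bar{w}\| \le \|(x,y) - w\| + \|w - \bar{w}\| \le \beta r + r = (1+\beta) r,
\end{equation*}
which shows $(x,y) \in \ball{(1+\beta) r}{\bar{w}}$. Hence $\ball{\beta r}{w} \subseteq \ball{(1+\beta) r}{\bar{w}}$.

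Finally, since the supremum of a function over a set is monotone nondecreasing in the set, taking the supremum of $(x,y) \mapsto f(w_x, y) - f(x, w_y)$ over the (possibly) larger set $\ball{(1+\beta) r}{\bar{w}}$ yields a value no smaller than the supremum over $\ball{\beta r}{w}$. This delivers exactly the claimed inequality. There is essentially no obstacle here: the only subtlety is making sure not to confuse the fixed arguments $w_x, w_y$ (which enter the objective) with the varying arguments $(x,y)$ (which are constrained to the ball), but once this distinction is clear the proof is a one-line triangle inequality followed by monotonicity of the supremum.
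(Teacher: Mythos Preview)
Your proof is correct and follows exactly the same approach as the paper: the paper's proof is the single line ``Holds because by the triangle inequality: $\ball{\beta r}{w} \subseteq \ball{(1+\beta)r}{\bar{w}}$,'' which is precisely the containment you establish, followed by monotonicity of the supremum.
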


\begin{proof}
Holds because by the triangle inequality: $\ball{\beta r}{w}  \subseteq \ball{(1+\beta)r}{\bar{w}}$.
\end{proof}

\subsection{Primal dual hybrid gradient}\label{sec:pdhg}

\begin{assumption}[Standard setup for PDHG adapted from \cite{chambolle2011first}]\label{assume:standard-pdhg-setup}
Let $X$ be a closed convex set,
$f(x, y) = y^T A x + G(x) - F^{*}(y)$
where $F : X \rightarrow \R$ and $G : X \rightarrow \R$ are continuous convex functions. 
\end{assumption}
By Assumption~\ref{assume:standard-pdhg-setup}, the convex conjugate $F^{*} : Y \rightarrow \R$ is also a convex and continuous, with $Y$ closed and convex.
We will use the Euclidean norm to measure distances and $\PdhgStepSize \in (0,\infty)$ as the step size for the algorithm\footnote{For simplicity we assume that the primal and dual step sizes are equal. However, all these results go through when the primal and dual step sizes are different. One can obtain such results by diagonal scaling of the primal and dual variables, or by changing the norm to $\| w \| =  \sqrt{ \| w_x \|_2^2  /\PrimalStepSize + \| w_y \|_2^2 /\DualStepSize}$ where $\PrimalStepSize, \DualStepSize \in (0,\infty)$ are the primal and dual step sizes, respectively.}.

\begin{algorithm}[htpb]
\Fn{\InitializePDHG{$u$}}{
\Return $\mathbf{0}, u, u_x$\;
}
\Fn{\OneStepOfPDHG{$\bar{u}, u, \hat{x}, t, \PdhgStepSize$}}{
$u_y^{+} \gets \argmin_{y \in Y} F^{*}(y) - y^T A \hat{x} +
\frac{1}{2 \PdhgStepSize} \| y - u_y \|_2^2$ \;
$u_x^{+} \gets \argmin_{x \in X} G(x) + (u_y^{+})^T A x +
\frac{1}{2 \PdhgStepSize} \| x - u_x \|_2^2$ \;
$\hat{x}^{+} \gets 2 u_{x}^{+} - u_x$ \;
$\bar{u}^{+} \gets \frac{t-1}{t} \bar{u} + \frac{\bar{u}^{+}}{t}$\;
\Return $\bar{u}^{+}, u^{+}, \hat{x}^{+}$
}
 \Fn{\AdaptiveRestartPDHG{$\outerIterate_0, \PdhgStepSize$}}{
  \For{$i = 1, \dots, \infty$}{
 $w_{i}^{0}, u_{i}^{0}, \hat{x}_{i}^{0} \gets $
\InitializePDHG{$\outerIterate_{i-1}$} \;
$t \gets 0$\;
\Repeat{restart condition \eqref{eq:our-restart-condition} holds}{
$t \gets t + 1$\;
$w_{i}^{t}, u_{i}^t, \hat{x}_{i}^t \gets$ \OneStepOfPDHG{$w_{i-1}^t, u_{i-1}^{t}, \hat{x}_{i-1}^t, t, \PdhgStepSize$} \;
}
$\tau_i \gets t, \outerIterate_i \gets w_{i}^t$\;
}
}
\label{function:AdaptiveRestartPDHG}
\end{algorithm}

\begin{lemma}[Theorem~1 of \cite{chambolle2011first}]\label{lem:pdhg-basic-result}
Suppose Assumption~\ref{assume:standard-pdhg-setup} holds. Let $L = \| A \|_2$, then \callAdaptiveRestartPDHG{} with $L \PdhgStepSize < 1$ satisfies,
$\sup_{(\tilde{x},\tilde{y}) \in \ball{R}{\bar{y}}} f(x_{i}^t, \tilde{y}) - f(\tilde{x}, y_{i}^t) \le \frac{ R^2}{2\PdhgStepSize t}$
where $(x_{i}^t, y_{i}^t) := w_{i}^t$, $(\bar{x}, \bar{y}) := \outerIterate_{i-1}$,
for all $R \in \R^{+}$. Furthermore,
$\| w_{i}^{t} - w^{*} \|_2 \le (1 - \PdhgStepSize^2 L^2)^{-1/2} \| \outerIterate_{i-1} - w^{*} \|_2$
for all $w^{*} \in W^{*}$.
\end{lemma}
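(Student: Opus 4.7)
The plan is to reduce the statement entirely to Theorem~1 of \cite{chambolle2011first} applied separately within each restart epoch. Looking at \AdaptiveRestartPDHG, within the $i$th epoch the routine sets $u_i^0 = \outerIterate_{i-1}$, $\hat x_i^0 = (\outerIterate_{i-1})_x$, and $\bar u_i^0 = 0$, and then repeatedly invokes \OneStepOfPDHG; this is indistinguishable from an unrestarted run of standard PDHG seeded at $\outerIterate_{i-1}$. The ergodic update (reading the $\bar u^+/t$ in the pseudocode as $u^+/t$) produces $w_i^t = \tfrac{1}{t}\sum_{k=1}^{t} u_i^k$, so the restart logic never enters the argument---each epoch is a black-box PDHG run.

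For the first claim, I would specialize Theorem~1 of \cite{chambolle2011first} to equal primal and dual step sizes $\PdhgStepSize$ and the Euclidean norm. Under $\PdhgStepSize L < 1$ that theorem gives, for every $(\tilde x, \tilde y) \in W$ and every $t \in \N$,
\begin{flalign*}
f(x_i^t, \tilde y) - f(\tilde x, y_i^t) \le \frac{\|(\tilde x, \tilde y) - \outerIterate_{i-1}\|_2^2}{2 \PdhgStepSize t}.
\end{flalign*}
Taking the supremum over $(\tilde x, \tilde y) \in \ball{R}{\outerIterate_{i-1}}$ bounds the right side by $R^2/(2\PdhgStepSize t)$, which is the stated duality-gap bound.

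For the non-ergodic contraction, Chambolle and Pock's analysis shows $\|u_i^t - w^*\|_M \le \|\outerIterate_{i-1} - w^*\|_M$ in the proximal metric $M$ determined by $\|(x,y)\|_M^2 = \PdhgStepSize^{-1}(\|x\|_2^2 + \|y\|_2^2) - 2 y^\top A x$, which is positive definite precisely when $\PdhgStepSize L < 1$. Converting to the Euclidean norm via an eigenvalue estimate yields the claimed contraction, and the main obstacle I anticipate is squeezing the sharp constant $(1 - \PdhgStepSize^2 L^2)^{-1/2}$ out of this conversion: a naive pass gives only the looser factor $\sqrt{(1 + \PdhgStepSize L)/(1 - \PdhgStepSize L)}$, and recovering the stated constant requires either a symmetry argument on the spectrum of $M$ or separately tracking the primal and dual contractions so that the factors $(1 + \PdhgStepSize L)$ in the numerator and denominator cancel. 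Once this technicality is pinned down, the full lemma follows.
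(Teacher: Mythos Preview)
The paper does not give its own proof of this lemma; it is stated as a direct citation of Theorem~1 in \cite{chambolle2011first}. Your reduction---treating each restart epoch as a fresh unrestarted PDHG run seeded at $\outerIterate_{i-1}$ and then invoking Chambolle--Pock---is exactly what the paper intends, and your derivation of the ergodic gap bound is correct and matches that reference.

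For the contraction bound, however, the route you sketch will not recover the stated constant, and the fixes you propose (a spectral symmetry argument on $M$, or separately tracking primal and dual contractions) do not close the gap. The eigenvalues of the proximal metric $M$ with equal step sizes are $\tfrac{1}{\PdhgStepSize}\pm\sigma_j(A)$, so comparing $\|\cdot\|_M$ with $\|\cdot\|_2$ unavoidably produces the ratio $\sqrt{(1+\PdhgStepSize L)/(1-\PdhgStepSize L)} = (1+\PdhgStepSize L)\,(1-\PdhgStepSize^2 L^2)^{-1/2}$, which is strictly larger than the claimed factor. The constant $(1-\PdhgStepSize^2 L^2)^{-1/2}$ is instead read off directly from the telescoped estimate in Chambolle--Pock's proof: after summing the one-step inequalities, the only surviving cross term is $\langle A(u_{i,x}^{t}-u_{i,x}^{t-1}),\,u_{i,y}^{t}-w^{*}_y\rangle$, and this is absorbed using Young's inequality against the leftover $\tfrac{1}{2\PdhgStepSize}\|u_{i,x}^{t}-u_{i,x}^{t-1}\|_2^2$ term, producing the factor $(1-\PdhgStepSize^2 L^2)$ on the $y$-block of the left-hand side rather than on both sides. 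This yields $\|u_i^{k}-w^{*}\|_2 \le (1-\PdhgStepSize^2 L^2)^{-1/2}\|\outerIterate_{i-1}-w^{*}\|_2$ for every $k$, and the bound for the averaged iterate $w_i^{t}=\tfrac{1}{t}\sum_{k=1}^{t}u_i^{k}$ then follows by convexity of the norm---a small step you should also record explicitly.
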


\begin{theorem}\label{main-pdhg-theorem}
Suppose Assumptions~\ref{assume:error-bound} and \ref{assume:standard-pdhg-setup} hold. Let $L = \| A \|_2$, $\theta \in (0,L]$, $\PdhgStepSize \in (0, 1/L)$, $\beta \in (0,1)$, and $\epsilon \in (0, 1)$. Define
$q := (1 - \PdhgStepSize^2 L^2)^{-1/2}$ and 
$t^{*} := \frac{ (1 + q)^2 (1 + \beta)^2}{\beta^2} \frac{1}{2 \gamma \theta} + 2$.
Consider the sequence $\{ \outerIterate_i \}_{i=0}^{\infty}$, $\{ \tau_i \}_{i=1}^{\infty}$ generated by \callAdaptiveRestartPDHG{} with $\outerIterate \in W$. Then for
$$
n = \bigg\lceil \log_{1/\beta}\left( \frac{1 + q}{\epsilon} \max\left\{ 1, \frac{ t^{*}}{\tau_1} \right\} \right) \bigg\rceil
$$
the inequality $\frac{\| W^{*} - \outerIterate_{n} \|}{\| W^{*} - \outerIterate_0 \|} \le \epsilon$ holds and 
$\sum_{i=1}^{n} \tau_i \le t^{*} n + 2 (\tau_1 - t^{*})^{+}$.
\end{theorem}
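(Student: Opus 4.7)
The plan is to apply Theorem~\ref{thm:main-result} to \callAdaptiveRestartPDHG{} by verifying that Assumptions~\ref{assume-Delta-r-marginal-gain}, \ref{assume:reduce-potential-function}, and~\ref{assume:contraction} all follow from Assumption~\ref{assume:standard-pdhg-setup} together with Lemma~\ref{lem:pdhg-basic-result} (Assumption~\ref{assume:error-bound} is given by hypothesis). The reductions are essentially bookkeeping; the only small trick is to use Fact~\ref{fact:saddle-point-conversion} to re-center Chambolle and Pock's bound around the current iterate $w_i^t$ rather than around the restart point $v_{i-1}$, so that it matches the form required by Assumption~\ref{assume:reduce-potential-function}.

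First, Assumption~\ref{assume-Delta-r-marginal-gain} follows from Lemma~\ref{lem:Delta-r-marginal-gain}: under Assumption~\ref{assume:standard-pdhg-setup} the function $f(x,y) = y^T A x + G(x) - F^*(y)$ is continuous, convex in $x$, and concave in $y$ on closed convex sets. Second, for Assumption~\ref{assume:reduce-potential-function} I would take $\phi(t) = t$, set $r = \| w^t - v \|$, and apply Lemma~\ref{lem:pdhg-basic-result} with radius $R = (1+\beta) r$ combined with Fact~\ref{fact:saddle-point-conversion} to obtain
\begin{equation*}
\Delta_{\beta r}(w^t) \;\le\; \sup_{(x,y) \in \ball{(1+\beta) r}{v}} f(w^t_x, y) - f(x, w^t_y) \;\le\; \frac{(1+\beta)^2 r^2}{2 \PdhgStepSize\, t},
\end{equation*}
which is exactly the desired bound with $C = \frac{1}{2 \PdhgStepSize}$; note $\phi(t) = t$ is strictly increasing, log-concave, and unbounded above. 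Third, Assumption~\ref{assume:contraction} is immediate from the second conclusion of Lemma~\ref{lem:pdhg-basic-result} with the constant choice $Q(t) \equiv q = (1 - \PdhgStepSize^2 L^2)^{-1/2}$, which is trivially monotone decreasing.

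With the assumptions verified, the condition number is $\hat{\kappa} = C/\theta = \frac{1}{2 \PdhgStepSize \theta}$. Substituting $\phi(t) = t$ and $Q(t) \equiv q$ into the defining inequality~\eqref{define:t-star} gives that the smallest admissible $t^*$ is
\begin{equation*}
t^* \;=\; \frac{(1+q)^2 (1+\beta)^2}{\beta^2}\cdot \frac{1}{2\PdhgStepSize \theta} \;+\; 2,
\end{equation*}
which matches the statement of the theorem; the condition $\theta \le L$ together with $\PdhgStepSize L < 1$ ensures $t^* \ge 2$ as required. Since $\phi(t^*)/\phi(\tau_1) = t^*/\tau_1$ and $Q(\tau_1) = q$, plugging directly into the conclusion of Theorem~\ref{thm:main-result} yields both the stated bound on $n$ and the iteration budget $\sum_{i=1}^{n} \tau_i \le t^* n + 2(\tau_1 - t^*)^{+}$.

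The main obstacle is really only the centering step: Lemma~\ref{lem:pdhg-basic-result} delivers a duality gap bound whose ball is centered at the restart point, whereas Assumption~\ref{assume:reduce-potential-function} demands a bound centered at $w^t$. Fact~\ref{fact:saddle-point-conversion} resolves this cleanly and produces exactly the $(1+\beta)^2$ factor baked into Assumption~\ref{assume:reduce-potential-function}, after which the remainder of the argument is a direct invocation of Theorem~\ref{thm:main-result}.
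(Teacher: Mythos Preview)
Your proposal is correct and follows essentially the same approach as the paper: verify Assumptions~\ref{assume-Delta-r-marginal-gain}, \ref{assume:reduce-potential-function}, and~\ref{assume:contraction} via Lemma~\ref{lem:Delta-r-marginal-gain}, Fact~\ref{fact:saddle-point-conversion} together with Lemma~\ref{lem:pdhg-basic-result} (taking $R=(1+\beta)r$, $\phi(t)=t$, $C=\tfrac{1}{2\gamma}$), and the constant $Q(t)\equiv q$, then invoke Theorem~\ref{thm:main-result}. Your identification of the re-centering via Fact~\ref{fact:saddle-point-conversion} as the only nontrivial step is exactly right.
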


\begin{proof}
By Lemma~\ref{lem:Delta-r-marginal-gain} and Assumption~\ref{assume:standard-pdhg-setup}, Assumption~\ref{assume-Delta-r-marginal-gain} holds.
By combining Fact~\ref{fact:saddle-point-conversion} and Lemma~\ref{lem:pdhg-basic-result} with $R = (1 + \beta) r$, we observe that Assumption~\ref{assume:reduce-potential-function} holds with $\phi(t) = t$ and $C = \frac{1}{2 \gamma}$.  Lemma~\ref{lem:pdhg-basic-result} implies Assumption~\ref{assume:contraction} holds with $Q(t) := q$. Therefore we have established the premise of Theorem~\ref{thm:main-result} which implies the desired result.
\end{proof}

Corollary~\ref{coro:new-pdhg-bound} simply instantiates Theorem~\ref{main-pdhg-theorem} with $\beta=1/2$ and $\gamma=0.7 / L$.
These particular values are chosen to (approximately) minimize the constant factors.

\begin{corollary}\label{coro:new-pdhg-bound}
Suppose Assumption~\ref{assume:error-bound} and \ref{assume:standard-pdhg-setup} holds. Let $L = \| A \|_2$, $\theta \in (0,L]$, $\PdhgStepSize = 0.7 / L$, $\beta = 0.5$, $\epsilon \in (0, 1)$, and $\outerIterate_0 \in W$. 
 Then \callAdaptiveRestartPDHG{} requires at most 
$57 \frac{L}{\theta}  \ln\left( \frac{4}{ \epsilon} \right) + \max\left\{ 57 \frac{L}{\theta} \ln\left(\max\left\{ 1, \frac{77 L}{\theta \tau_1} \right\} \right),
 2 \tau_1 \right\}$
calls to \OneStepOfPDHG until some $\outerIterate_{n}$ satisfies $\frac{\| \outerIterate_{n} - w^{*} \|_2}{ \| \outerIterate_0 - w^{*} \|_2} \le \epsilon$.
\end{corollary}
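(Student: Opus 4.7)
The plan is to apply Theorem~\ref{main-pdhg-theorem} with $\beta = 1/2$ and $\gamma = 0.7/L$ and verify the numerical constants $57$ and $77$ by routine computation. First I would compute the theorem's quantities: $q = 1/\sqrt{1-(\gamma L)^2} = 1/\sqrt{0.51} < 1.41$, so $(1+q)^2 < 5.77$ and $t^{*} = \frac{9(1+q)^2}{2\gamma\theta} + 2 < 37.1\, L/\theta + 2$. The assumption $\theta \in (0,L]$ gives $L/\theta \ge 1$, so $t^{*} < 39.1\, L/\theta$; in particular I would establish the two key inequalities $t^{*} < 77\, L/\theta$ and $t^{*}/\ln 2 < 57\, L/\theta$, which underlie the corollary's stated constants. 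Converting $\log_{1/\beta} = \log_2 = \ln/\ln 2$ yields $n \le \log_2\bigl((1+q)/\epsilon \cdot \max\{1, t^{*}/\tau_1\}\bigr) + 1$.

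Next I would combine this with Theorem~\ref{main-pdhg-theorem}'s bound $\sum \tau_i \le t^{*} n + 2(\tau_1 - t^{*})^{+}$ and split on whether $\tau_1 > t^{*}$ or $\tau_1 \le t^{*}$. When $\tau_1 > t^{*}$, the max in $n$ collapses to $1$, and the ceiling's $+t^{*}$ combines with the boundary's $-2t^{*}$ to give $\sum \tau_i \le t^{*} \log_2((1+q)/(2\epsilon)) + 2\tau_1$. Since $(1+q)/2 < 4$ and $t^{*}/\ln 2 < 57\, L/\theta$ (together with $\ln(1/\epsilon) \ge 0$), this is bounded by $57(L/\theta)\ln(4/\epsilon) + 2\tau_1$, fitting the $2\tau_1$ branch of the $\max$ in the corollary. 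When $\tau_1 \le t^{*}$, the boundary term vanishes and $\sum \tau_i \le (t^{*}/\ln 2)\bigl[\ln(4/\epsilon) + \ln((1+q)/2) + \ln(t^{*}/\tau_1)\bigr]$. Bounding $\ln(t^{*}/\tau_1) \le \ln(77L/(\theta\tau_1))$ via $t^{*} < 77\, L/\theta$, the comparison with $57(L/\theta)\bigl[\ln(4/\epsilon) + \ln(77L/(\theta\tau_1))\bigr]$ has slack at least $(t^{*}/\ln 2)\ln(77L/(t^{*}\theta)) > 56.3 \ln(77/39) \cdot L/\theta > 38\, L/\theta$, which comfortably absorbs the correction $(t^{*}/\ln 2)\ln((1+q)/2) < 11\, L/\theta$.

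The hard part will be the tight numerical bookkeeping. The constants $57$ and $77$ are chosen just large enough that the ceiling's $+1$, the correction $\ln((1+q)/2) \approx 0.19$, and the log-base conversion factor $1/\ln 2$ all fit inside the slack margins $57 - t^{*}/(L/\theta \cdot \ln 2)$ and $77 - t^{*}/(L/\theta)$. The verification in each case reduces to term-by-term numerical inequalities, with the key leverage coming from $\ln(1/\epsilon) \ge 0$ for $\epsilon \in (0,1)$ and $\ln(77L/(\theta t^{*})) > \ln(77/39) > 0.68$ in the second case.
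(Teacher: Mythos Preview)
Your approach is essentially the same as the paper's: instantiate Theorem~\ref{main-pdhg-theorem} with $\beta=1/2$ and $\gamma=0.7/L$, then compute $q=0.51^{-1/2}$, $1+q\le 3$, $t^{*}\le 39\,L/\theta$, and $t^{*}/\ln 2\le 57\,L/\theta$. The paper's proof stops there and leaves the remaining bookkeeping implicit; you spell out the case split on $\tau_1\lessgtr t^{*}$ and the handling of the ceiling, which is correct and more thorough (your slack computation in the $\tau_1\le t^{*}$ case is more elaborate than necessary---combining everything into a single logarithm and using $2(1+q)t^{*}\le 6\cdot 39\,L/\theta < 4\cdot 77\,L/\theta$ is simpler---but it works).
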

\begin{proof}
Follows from the bound in Theorem~\ref{thm:new-agd-bound}. In particular, observing that for $\PdhgStepSize=0.7 / L$, $\beta=1/2$ we have $q = 0.51^{-1/2}$, $1 + 0.51^{-1/2} \le 3$, $t^{*} = \frac{ (1 + q)^2 (1 + \beta)^2}{\beta^2} \frac{1}{2 \gamma \theta} + 2 \le \frac{(1 + 0.51^{-1/2})^2 9}{0.7 \times 2} \frac{L}{\theta} + 2 \le 39 \frac{L}{\theta}$, $t^{*} / \ln(2) \le 57 \frac{L}{\theta}$. 
\end{proof}

Using almost the same argument, one can also show that our restart scheme applied to extragradient obtains the same worst-case complexity as Corollary~\ref{coro:new-pdhg-bound} (up to constant factors). We give the details for this in Appendix~\ref{sec:extragradient}.

Let us discuss our result in the bilinear setup. Recall that by Lemma~\ref{lem:minimum-nonsingular-value} in the bilinear setup $\theta$ is the minimum nonzero singular value of $A$ which we denote by $\sigma_{\min}$. Using standard lower bound arguments one can show that our bound $O(L / \sigma_{\min} \ln(1/\epsilon))$ is optimal for bilinear games (see Appendix~\ref{sec:lower-bounds}).
In contrast, the best known guarantees for saddle point algorithms in the bilinear setup \cite{mokhtari2019unified,zhangconvergence}\footnote{For comparing with \cite{mokhtari2019unified} note that in their setup $L = \sqrt{\lambda_{\max}(A^T A)}$ and $\sigma_{\min} = \sqrt{\lambda_{\min}(A^T A)}$.} study the last iterate and give bounds of the form $O(L^2 / \sigma_{\min}^2 \ln(1/\epsilon))$.

\subsection{Accelerated gradient descent}

For this subsection, let $f(x) = \SmoothFunction(x) + \ProxFunction(x)$ where $\SmoothFunction : \R^{n} \rightarrow \R$ is a smooth convex function and $\ProxFunction : \R^{n} \rightarrow \R$ is a continuous convex function which is possibly nonsmooth. We assume that the quantity $p_{\ell}(y) := \argmin_{x \in \R^{n}} \grad \SmoothFunction(y)^T (x - y) + \ProxFunction(x) + \frac{\ell}{2} \| x - y \|_2^2$ is easily computable. We use FISTA with backtracking line search \cite{beck2009fast} as the basic algorithm that we then integrate with \callGenericAdaptiveRestartScheme{}. We label this integrated algorithm \callAdaptiveRestartAGD{} and include its full description in Appendix~\ref{appendix:theory-to-specific:agd}. For conciseness, \textbf{define}: $\bar{\kappa} := \frac{L \backtrackingfactor}{\alpha}$
where $\backtrackingfactor \in (1,\infty)$ is the backtracking parameter for \callAdaptiveRestartAGD{}, $a$ is $L$-smooth and $\alpha$-strongly convex.
Theorem~\ref{thm:new-agd-bound} is an application of Theorem~\ref{thm:main-result} to AGD.
The proof involves establishing Assumptions~\ref{assume-Delta-r-marginal-gain}, \ref{assume:reduce-potential-function}, \ref{assume:contraction}, and~\ref{assume:error-bound} which is follows from standard results \cite{beck2009fast}. Corollary~\ref{coro:new-agd-bound} simply instantiates Theorem~\ref{thm:new-agd-bound} with $\beta=1/4$. Strong convexity is used both to establishes both Assumption~\ref{assume:reduce-potential-function} and \ref{assume:contraction}, and cannot be relaxed to quadratic growth.
The value $\beta=1/4$ is designed to (approximately) minimize the constant factors.
The proof of Theorem~\ref{thm:new-agd-bound} and Corollary~\ref{coro:new-agd-bound} appears in Appendix~\ref{sec:coro:thm:new-agd-bound}.

\begin{theorem}\label{thm:new-agd-bound}
Let $\beta, \epsilon \in (0, 1)$, $\outerIterate_0 \in W$, and suppose that $\SmoothFunction$ is $\alpha$-strongly convex and $L$-smooth with minimizer $w^{*}$. Define $t^{*} := 1 +  \sqrt{\hat{\kappa}} (\rho + \sqrt{\rho^2 + 4 \rho})$
where $\rho := \frac{1+\beta}{\beta}$. Consider the sequence $\{ \outerIterate_i \}_{i=0}^{\infty}$, $\{ \tau_i \}_{i=1}^{\infty}$ generated by \callAdaptiveRestartAGD{}. Then for
$$
n = \bigg\lceil \log_{1/\beta}( 2/\epsilon ) +  3 \log_{1/\beta} (\max\{ 1, t^{*} / \tau_1) \} ) \bigg\rceil
$$
the inequality $\frac{\| w^{*} - \outerIterate_{n} \|}{\| w^{*} - \outerIterate_0 \|} \le \epsilon$ holds and 
$\sum_{i=1}^{n} \tau_i \le t^{*} n + 2 (\tau_1 - t^{*})^{+}$.
\end{theorem}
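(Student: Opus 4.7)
The plan is to derive Theorem~\ref{thm:new-agd-bound} as a direct instantiation of the generic Theorem~\ref{thm:main-result} to FISTA-with-backtracking, the inner algorithm driving \callAdaptiveRestartAGD{}. The work breaks into (i) verifying each of Assumptions~\ref{assume-Delta-r-marginal-gain}, \ref{assume:reduce-potential-function}, \ref{assume:contraction}, and~\ref{assume:error-bound} for this inner algorithm, (ii) reading off the resulting $\phi$, $C$, $\theta$, and $Q$, and (iii) computing $t^{*}$ from \eqref{define:t-star} and simplifying the logarithmic iteration count of Theorem~\ref{thm:main-result} into the cleaner form in the theorem statement.

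In this pure-minimization setup the saddle function collapses to $f(w) = \SmoothFunction(w) + \ProxFunction(w)$, which is convex, so Lemma~\ref{lem:Delta-r-marginal-gain} yields Assumption~\ref{assume-Delta-r-marginal-gain} immediately. The Beck--Teboulle FISTA-with-backtracking bound from \cite{beck2009fast} gives, for every $\hat{w}$,
\[
f(w^t) - f(\hat{w}) \;\le\; \frac{2 L \backtrackingfactor \, \| w^0 - \hat{w} \|_2^2}{(t+1)^2}.
\]
Setting $r := \| w^t - \outerIterate \|_2$ and applying Fact~\ref{fact:saddle-point-conversion} (which in the primal-only case is just the containment $\ball{\beta r}{w^t} \subseteq \ball{(1+\beta) r}{\outerIterate}$) gives Assumption~\ref{assume:reduce-potential-function} with $\phi(t) = (t+1)^2$ and $C = 2 L \backtrackingfactor$. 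The $\alpha$-strong convexity of $\SmoothFunction$ implies $\alpha$-quadratic growth of $f$, so Remark~\ref{rem:quadratic-growth} gives Assumption~\ref{assume:error-bound} with $\theta = \alpha/2$; hence $\hat{\kappa} = C/\theta = 4 \bar{\kappa}$. Finally, chaining the FISTA bound with strong convexity produces $\| w^t - w^{*} \|_2 \le \frac{\sqrt{\hat{\kappa}}}{t+1}\,\| \outerIterate - w^{*} \|_2$, so Assumption~\ref{assume:contraction} holds with the monotone choice $Q(t) = \sqrt{\hat{\kappa}}/(t+1)$.

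Plugging $\phi(t) = (t+1)^2$ and this $Q$ into \eqref{define:t-star} reduces the definition of $t^{*}$ to the scalar quadratic inequality $(t^{*}-1)^2 - \rho \sqrt{\hat{\kappa}}\,(t^{*}-1) - \rho \hat{\kappa} \ge 0$ in the variable $t^{*}-1$, where $\rho = (1+\beta)/\beta$. Solving by the quadratic formula yields the expression $t^{*} = 1 + \sqrt{\hat{\kappa}}\bigl(\rho + \sqrt{\rho^2 + 4\rho}\bigr)$ stated in the theorem, up to a harmless constant that is absorbed by slightly loosening $Q$ to $Q(t) \le 2 \sqrt{\hat{\kappa}}/(t+1)$. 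Theorem~\ref{thm:main-result} then directly supplies $\sum_{i=1}^{n} \tau_i \le t^{*} n + 2 (\tau_1 - t^{*})^{+}$. To put the iteration count $n$ into the claimed form, note that the generic bound $\log_{1/\beta}\bigl((1+Q(\tau_1)) \max\{1, \phi(t^{*})/\phi(\tau_1)\} / \epsilon\bigr)$ splits as follows: since $\phi(t^{*})/\phi(\tau_1) = ((t^{*}+1)/(\tau_1+1))^2$ it contributes a factor $2$ in front of $\log_{1/\beta}(\max\{1, t^{*}/\tau_1\})$, while the bound $1 + Q(\tau_1) \le 2 \max\{1, \sqrt{\hat{\kappa}}/(\tau_1+1)\}$ contributes one more copy of $\log_{1/\beta}(\max\{1, t^{*}/\tau_1\})$ plus an additive $\log_{1/\beta}(2)$, yielding the stated $\log_{1/\beta}(2/\epsilon) + 3 \log_{1/\beta}(\max\{1, t^{*}/\tau_1\})$.

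The main technical obstacle is the final constant-tracking step: carefully bounding $1 + Q(\tau_1)$ so that the ``$2$'' coming from $\phi(t) = (t+1)^2$ and the ``$1$'' coming from $Q$ combine into the coefficient $3$ in the theorem, without spawning extra log factors. Everything else is a mechanical verification of the four generic assumptions, plus one quadratic-formula calculation for $t^{*}$; one should also state once, before invoking it, that FISTA-with-backtracking attains the Beck--Teboulle bound with $L \backtrackingfactor$ (rather than $L$) in the numerator, which is standard but worth recording explicitly.
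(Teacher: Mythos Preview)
Your approach is essentially the paper's own: verify Assumptions~\ref{assume-Delta-r-marginal-gain}--\ref{assume:error-bound} via Lemma~\ref{lem:Delta-r-marginal-gain}, the FISTA-with-backtracking bound (Lemma~\ref{lem:std-agd-result}) combined with Fact~\ref{fact:saddle-point-conversion}, and strong convexity for both $\theta=\alpha/2$ and $Q(t)=2\sqrt{\bar\kappa}/(t+1)=\sqrt{\hat\kappa}/(t+1)$, then invoke Theorem~\ref{thm:main-result}, solve the quadratic for $t^{*}$, and use $t^{*}\ge 2\sqrt{\hat\kappa}$ to collapse the $1+Q(\tau_1)$ factor into a third copy of $\log_{1/\beta}\max\{1,t^{*}/\tau_1\}$. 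One small cleanup: your quadratic $(t^{*}-1)^2-\rho\sqrt{\hat\kappa}(t^{*}-1)-\rho\hat\kappa=0$ actually has root $\tfrac{1}{2}\sqrt{\hat\kappa}(\rho+\sqrt{\rho^2+4\rho})$, and ``loosening $Q$ by a factor of $2$'' does not produce the stated constant either; the clean fix is simply that \eqref{define:t-star} is an inequality, so the theorem's (larger) $t^{*}$ is automatically a valid choice.
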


\begin{corollary}\label{coro:new-agd-bound}
Suppose $\SmoothFunction$ is $\alpha$-strongly convex and $L$-smooth with minimizer $w^{*}$. Let $\beta = 1/4$, $\epsilon \in (0,1)$, and $\outerIterate_0 \in W$.
Then \callAdaptiveRestartAGD{} requires at most $8.5 \left( \sqrt{\bar{\kappa}} + 1 \right) \ln\left( \frac{8}{\epsilon} \right) +  \max\left\{
  26 \sqrt{\bar{\kappa}} \ln\left( \frac{12 \sqrt{\bar{\kappa}}}{\tau_1} \right),
 2 \tau_1 \right\}$ calls to \OneStepOfAGD until some $\outerIterate_{n}$ satisfies $\frac{\| w^{*} - \outerIterate_{n} \|_2}{\| w^{*} - \outerIterate_0 \|_2} \le \epsilon$.
\end{corollary}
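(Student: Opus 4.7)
The corollary is a direct specialization of Theorem~\ref{thm:new-agd-bound} at $\beta = 1/4$, followed by simplification to expose clean numerical constants. The first step is to plug in this choice of $\beta$. Setting $\beta = 1/4$ gives $\rho = (1+\beta)/\beta = 5$, so $\sqrt{\rho^2 + 4\rho} = \sqrt{45} = 3\sqrt{5}$ and $\rho + \sqrt{\rho^2 + 4\rho} = 5 + 3\sqrt{5}$. Thus Theorem~\ref{thm:new-agd-bound} gives $t^* = 1 + \sqrt{\hat{\kappa}}\,(5 + 3\sqrt{5})$ and $\log_{1/\beta}(x) = \ln(x)/\ln 4$. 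Using the relation $\hat{\kappa} = C/\theta$, where $\theta = \alpha/2$ follows from Remark~\ref{rem:quadratic-growth} (strong convexity implies quadratic growth) and $C$ comes from the sublinear bound for AGD with backtracking (established in the proof of Theorem~\ref{thm:new-agd-bound}), one obtains $\sqrt{\hat{\kappa}}$ as a specific multiple of $\sqrt{\bar{\kappa}}$. Careful arithmetic then yields $t^*/\ln 4 \le 8.5(\sqrt{\bar{\kappa}} + 1)$ and $t^* \le 12\sqrt{\bar{\kappa}}$ modulo lower-order terms.

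Next, we evaluate the total iteration count. Dropping the ceiling in Theorem~\ref{thm:new-agd-bound} yields $n \le \log_{1/\beta}(2/\epsilon) + 3\log_{1/\beta}(\max\{1, t^*/\tau_1\}) + 1$, so
\[
\sum_{i=1}^n \tau_i \le t^*\bigl[\log_{1/\beta}(2/\epsilon) + 1\bigr] + 3t^*\log_{1/\beta}\bigl(\max\{1, t^*/\tau_1\}\bigr) + 2(\tau_1 - t^*)^+.
\]
The crucial algebraic step is that $\log_{1/\beta}(2/\epsilon) + 1 = \log_{1/\beta}(2/(\beta\epsilon)) = \log_{1/\beta}(8/\epsilon)$ precisely because $\beta = 1/4$; this absorbs the trailing $t^*$ into the first term. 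Converting $\log_{1/\beta}$ to natural log and applying the bound $t^*/\ln 4 \le 8.5(\sqrt{\bar{\kappa}} + 1)$ from Step~1 gives the first summand $8.5(\sqrt{\bar{\kappa}} + 1)\ln(8/\epsilon)$.

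For the remaining two terms, we substitute $3t^*/\ln 4 \le 26\sqrt{\bar{\kappa}}$ and $t^*/\tau_1 \le 12\sqrt{\bar{\kappa}}/\tau_1$, together with the trivial bound $2(\tau_1 - t^*)^+ \le 2\tau_1$. In the regime $\tau_1 \ge t^*$ the logarithmic term vanishes and $2\tau_1$ dominates; in the regime $\tau_1 < t^*$ the $(\tau_1 - t^*)^+$ term vanishes and the logarithmic term dominates. Replacing the sum of the two by their maximum yields $\max\{26\sqrt{\bar{\kappa}}\ln(12\sqrt{\bar{\kappa}}/\tau_1),\, 2\tau_1\}$, matching the claimed expression.

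The main obstacle is purely quantitative: producing the tight constants $8.5$, $26$, and $12$ requires a precise computation of the multiplier relating $\hat{\kappa}$ to $\bar{\kappa}$ as established in the appendix proof of Theorem~\ref{thm:new-agd-bound}, and the arithmetic has to be arranged so that the additive ``$1$'' inside $t^* = 1 + \sqrt{\hat{\kappa}}(5+3\sqrt{5})$ is absorbed into the ``$+1$'' factor in $\sqrt{\bar{\kappa}} + 1$. The choice $\beta = 1/4$ is essential here because it (approximately) minimizes $(\rho + \sqrt{\rho^2 + 4\rho})/\ln(1/\beta)$, which is precisely the quantity that governs the leading constant $8.5$; alternatives like $\beta = 1/2$ produce visibly worse constants. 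Beyond this numerical verification, the proof is direct substitution into Theorem~\ref{thm:new-agd-bound}.
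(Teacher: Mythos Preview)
Your proposal is correct and follows exactly the route the paper takes: substitute $\beta = 1/4$ into Theorem~\ref{thm:new-agd-bound}, compute $\rho = 5$ and $t^{*} = 1 + (5+3\sqrt{5})\sqrt{\hat\kappa}$, convert the base-$1/\beta$ logarithm to natural log via $\ln 4$, and then simplify the iteration bound $t^{*}n + 2(\tau_1 - t^{*})^{+}$ using the numerical estimates $t^{*}/\ln 4 \le 1 + 8.5\sqrt{\hat\kappa}$ together with $\hat\kappa = 4\bar\kappa$ from the proof of Theorem~\ref{thm:new-agd-bound}. Your write-up is in fact considerably more explicit than the paper's two-line proof; in particular, your observation that the ``$+1$'' from the ceiling combines with $\log_{1/\beta}(2/\epsilon)$ to give $\log_{1/\beta}(8/\epsilon)$, and your case split on $\tau_1 \lessgtr t^{*}$ to justify replacing a sum by a maximum, are details the paper leaves entirely implicit.
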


Compared with other restart schemes in literature the coefficient of $8.5$ on the $\sqrt{\bar{\kappa}} \ln(1/\epsilon)$ term in Corollary~\ref{coro:new-agd-bound} is small. For \cite{alamo2019restart} (with $\mu_0 = 1$) the coefficient is $\approx 32$, for \cite{fercoq2019adaptive} it is $\approx 45$, and an exact hyperparameter search on the restart period yields a coefficient of $\approx 5.4$ (Remark~\ref{remark:optimal-scheme}).

\section{Numerical results}

We test the proposed adaptive restart scheme applied to the PDHG and AGD algorithms, comparing both with no restarts and with the best fixed-period restart chosen by grid search. For AGD we additionally compare with~\cite{o2015adaptive}. For PDHG, we modified the implementation in the Python library ODL~\cite{jonas_adler_2017_249479}, while we implemented AGD in Julia. All experiments use $\tau_1 = 1$. We note that in these experiments, the adaptive restart scheme is applied heuristically as we expect, but have not established, that Assumption~\ref{assume:error-bound} holds globally. In all the examples involving PDHG (Figure~\ref{fig:games} and \ref{fig:lp}) the average performs much worse than the current so we only plot the current iterate for the no-restart scheme.
For the restart schemes, as our theory suggests, we plot the running average since the last restart. All source code is available as part of the supplementary materials.
More details on the experiments are found in Appendix~\ref{app:more-experimental-details}. 

\subsection{Matrix games}\label{sec:matrix_games}

Given a matrix $A \in \R^{m \times n}$, a matrix game is the simple saddle-point problem defined by $f(x,y) = y^T Ax$, $X = \{ x \in \R^n : \sum_{i=1}^n x_i = 1, x \ge 0\}$, and $Y = \{ y \in \R^m : \sum_{i=1}^m x_i = 1, x \ge 0\}$. Following \cite{kroer2019}, we use synthetic instances from two families with $m = n = 100$: first where the coefficients of $A$ are sampled independently uniformly over $[-1, -\frac{1}{2}]$, and second from a standard Normal distribution. We measure the \textit{saddle-point residual} defined as $\Delta_{\infty}(w)$ at each iteration. We solve 50 instances from each family and plot the median residual with error bands for the 10th to 90th percentile. Results are in Figure~\ref{fig:games}. The adaptive scheme is consistent with the best fixed restart scheme, and both are dramatically better than PDHG without restarts.  

\begin{figure}[ht]
    \centering
    \includegraphics[scale=0.29]{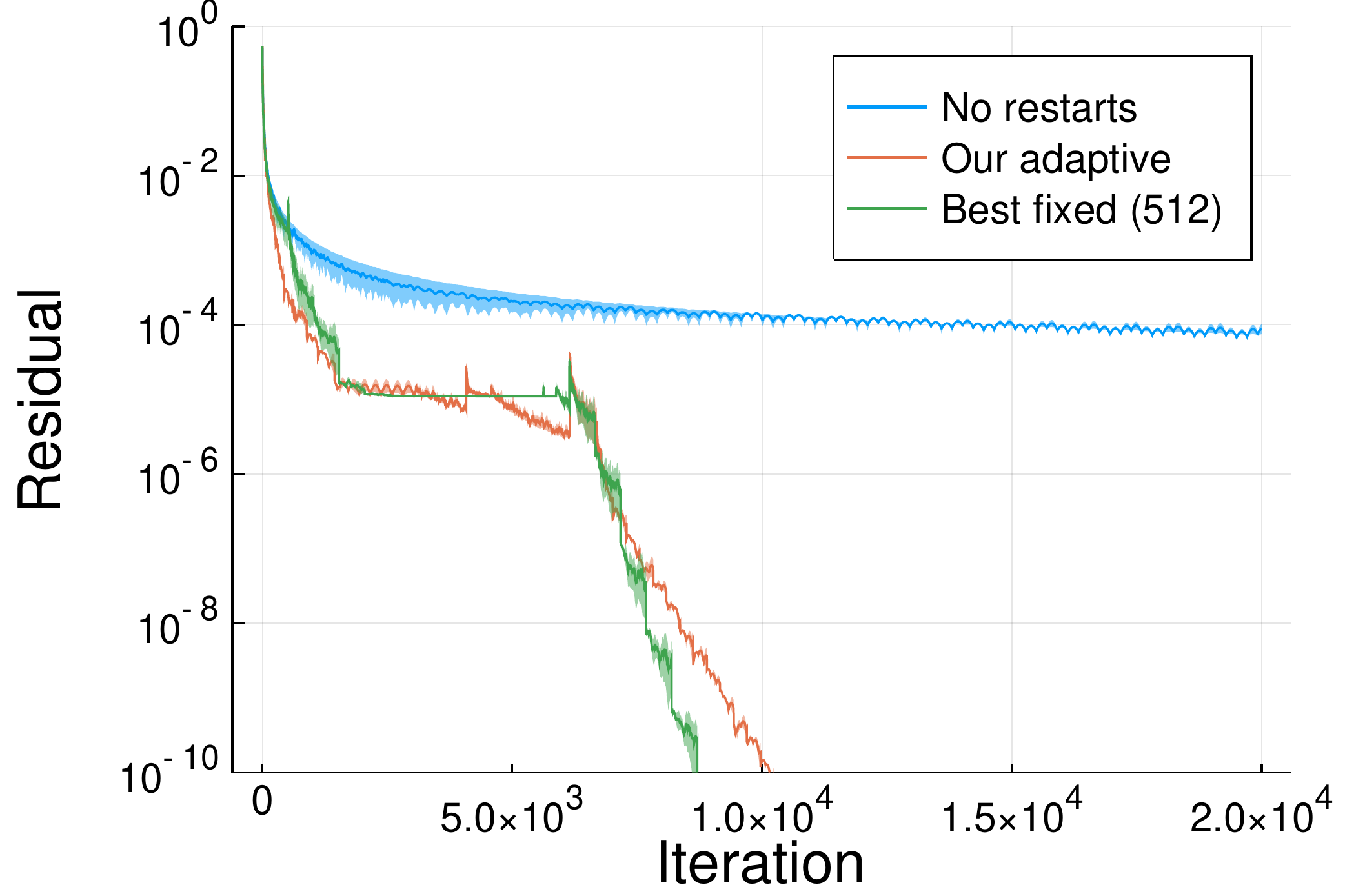}
    \includegraphics[scale=0.29]{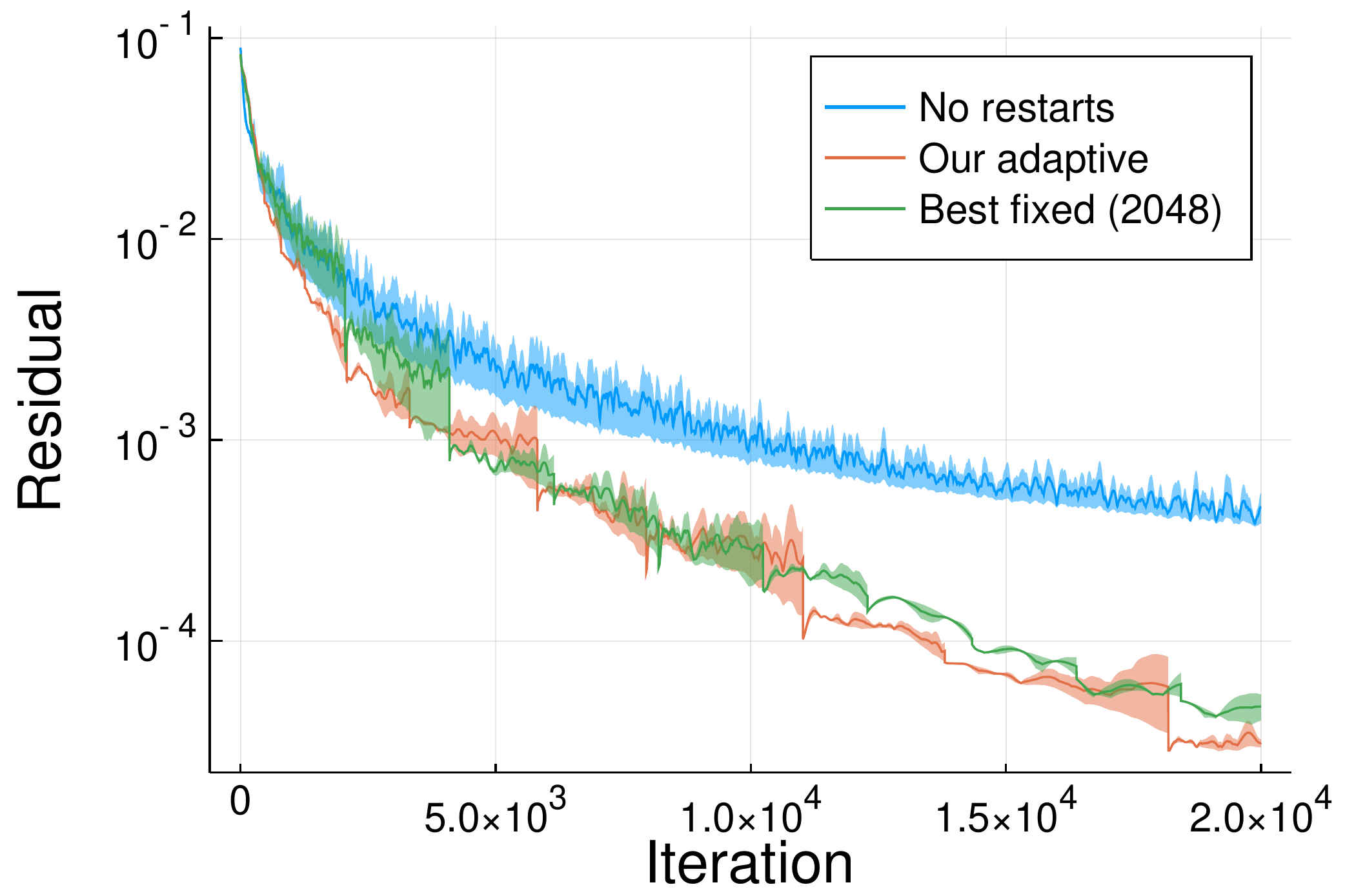}
    \caption{Matrix games (normal on left, uniform on right). The best fixed restart period is found via grid search on $\{8, 32, 128, 512, 2048\}$.
    }
    \label{fig:games}
\end{figure}

\subsection{Quadratic assignment problem relaxations}\label{sec:lp}

We select two linear programming (LP) instances, \texttt{qap15} and \texttt{nug08-3rd}, from the Mittelmann collection set~\cite{mittelmann_benchmark}, a standard benchmark set for LP. These two problems are relaxations of quadratic assignment problems~\cite{qapbounds2002}, a classical NP-hard combinatorial optimization problem.
These instances are known to be challenging for traditional solvers and amenable to first-order methods~\cite{GalabovaHall2020}.

We encode the LP $\min_x c^T x$ subject to $Ax = b, l \le x \le u$ in saddle-point form as $f(x,y) = c^T x - y^T Ax - b^T y$, $X = \{ x \in \R^n : l \le x \le u \}$, and $Y = \R^m$. Given a point $(\hat x, \hat y)$, the residual is measured as the $\ell_2$ norm of the vector concatenating the primal infeasibilities, dual infeasibilities, and the primal-dual objective gap. We additionally calculated the last iteration at which there is a change in the set of variables at either their upper or lower bounds as the \textit{last active set change}, a point that appears associated with the beginning of faster convergence. 
Results are in Figure~\ref{fig:lp}.

\begin{figure}[ht]
    \centering
    \includegraphics[scale=0.29]{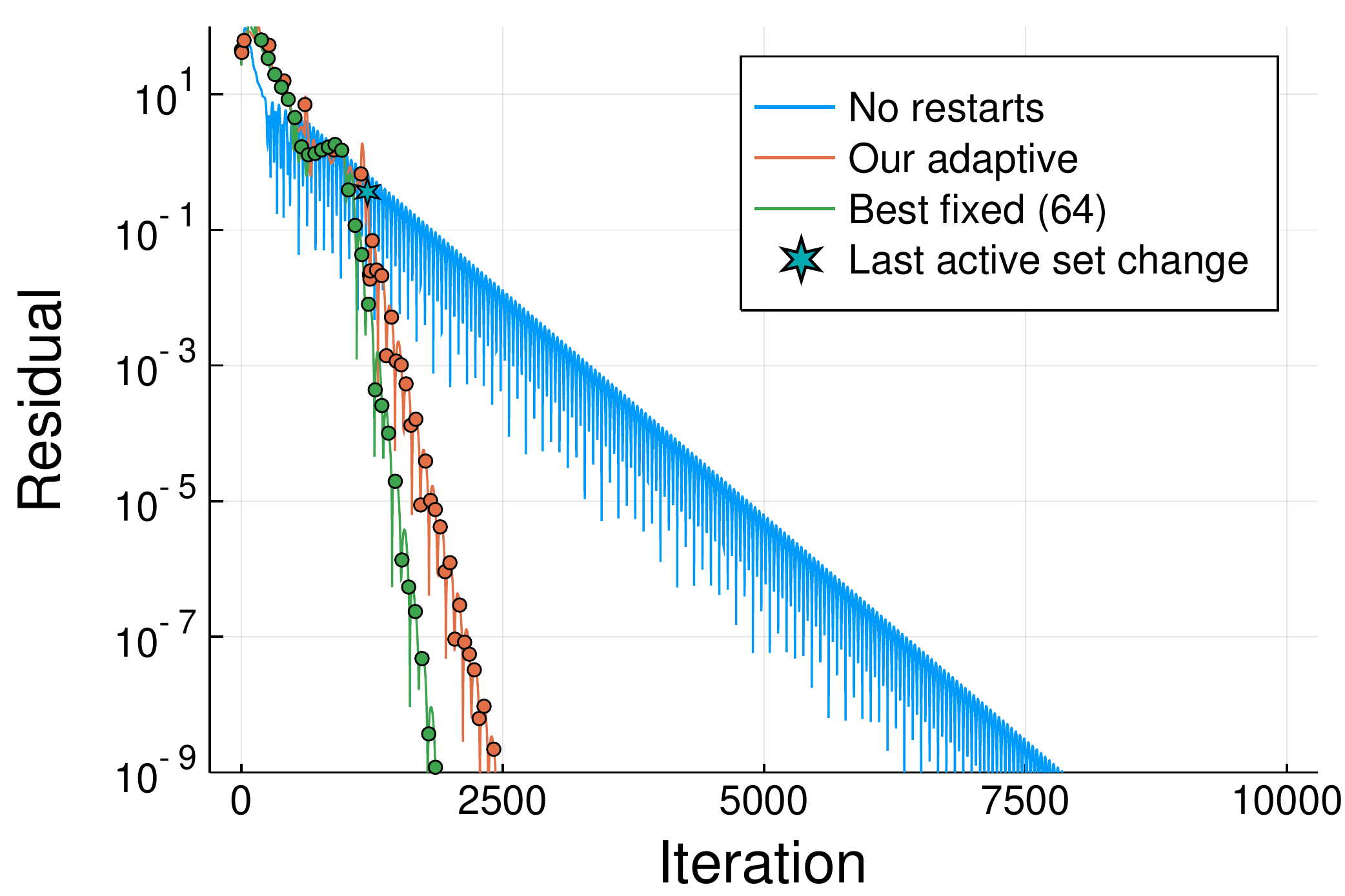}
    \includegraphics[scale=0.29]{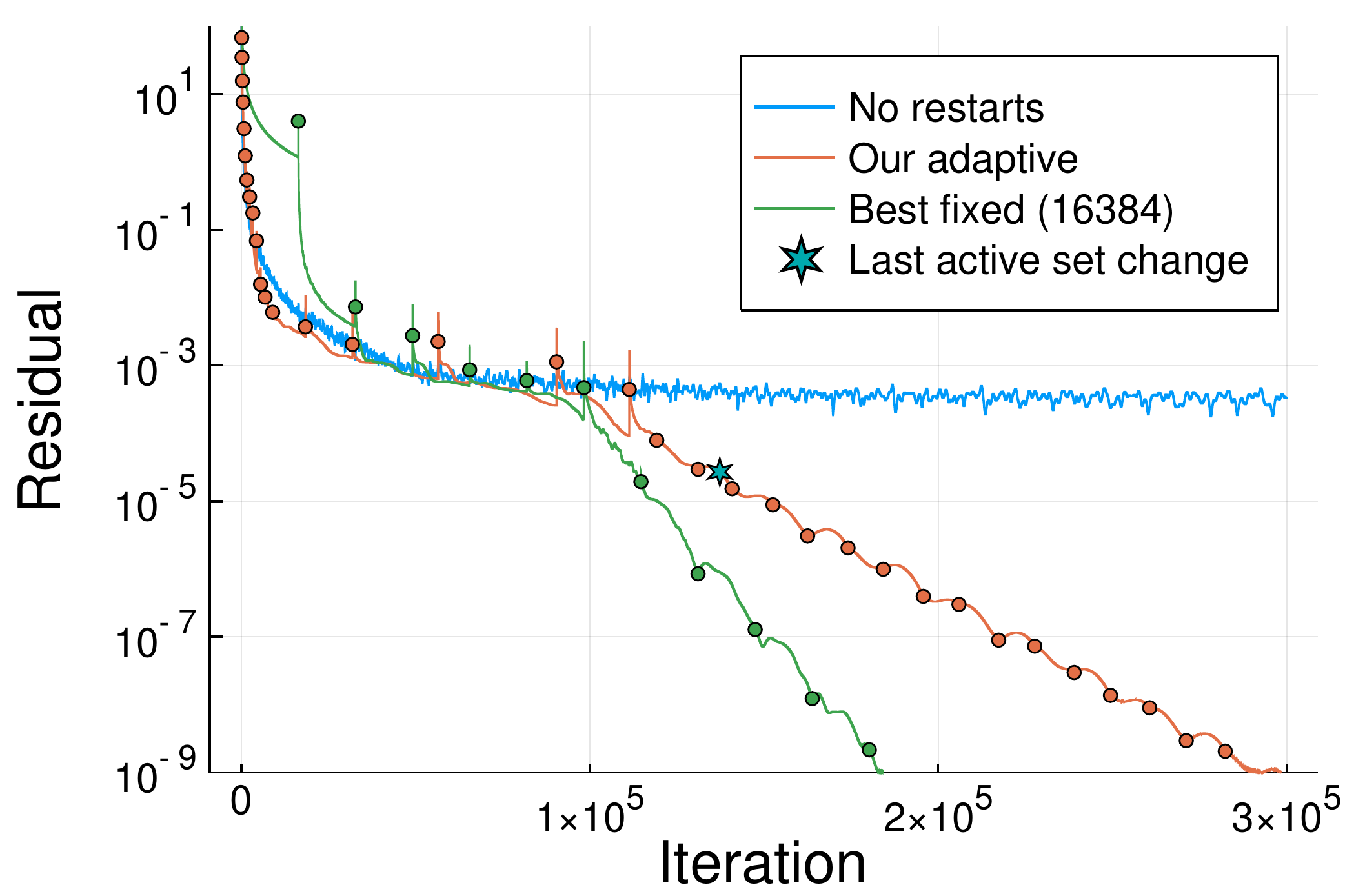}
    \caption{PDHG performance on LP instances \texttt{nug08-3rd} (left), \texttt{qap15} (right). The best fixed restart period is found via grid search on $\{64, 256, 1024, 4096, 16384, 65536\}$. Dots indicate restarts.
    }
    \label{fig:lp}
\end{figure}

\subsection{Logistic regression and LASSO}

To test our restart scheme for AGD we run on L1-regularizered logistic regression and LASSO problems downloaded from the LIBSVM dataset  \cite[\url{https://www.csie.ntu.edu.tw/~cjlin/libsvmtools/datasets/}]{chang2011libsvm}. We solve problems of the form $f(x) = \sum_{i=1}^n l_i(a_i^T x) + \lambda \| x \|_1$
where $\lambda$ is the regularization parameter, $a_i$ is the $i$th row of the data matrix, and $l_i$ is the loss function.
Let $b_i$ denote the data label. For LASSO we use
$l_i(c) = \frac{1}{2} (c - b_i)^2$, for L1-regularized logistic regression we use the log logistic loss $l_i(c) = \log(1 + \exp(c \sign(b_i)))$. The data matrix is preprocessed by (i) removing empty columns, (ii) adding an intercept, and (iii) normalizing the columns.
Statistics for the problems are given in Table~\ref{agd:dataset} in the Appendix.
We run \callAdaptiveRestartAGD{} with $\ell_0^0 = 1$, $\eta = 5/4$ and $\beta = 1/4$.
Figure~\ref{AGD:results} shows that our scheme is competitive with the function scheme of \citet{o2015adaptive} and in some instances (Duke breast cancer) does much better.

\begin{figure}[htbp]
    \centering
    \includegraphics[scale=0.21]{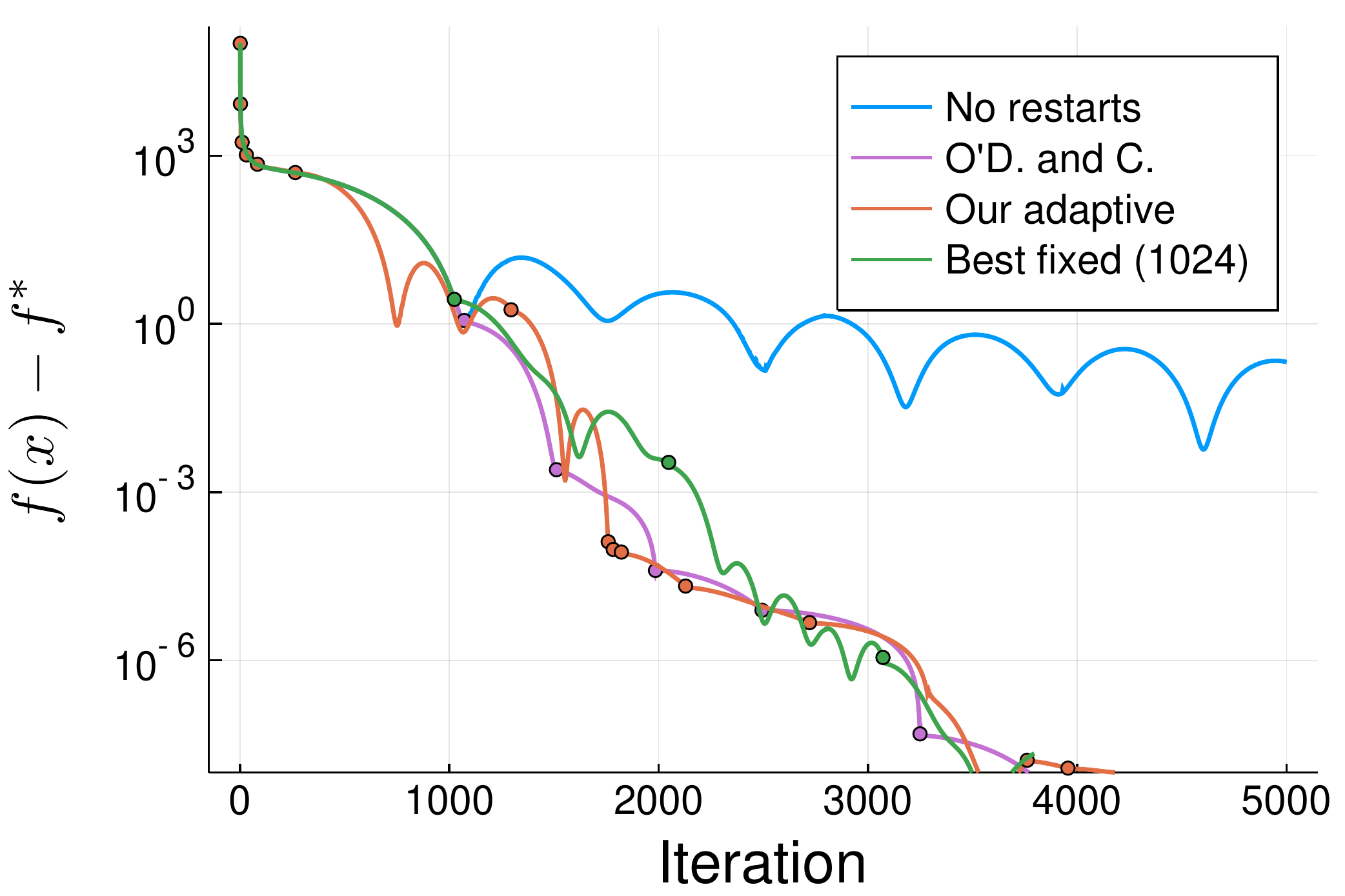} \includegraphics[scale=0.21]{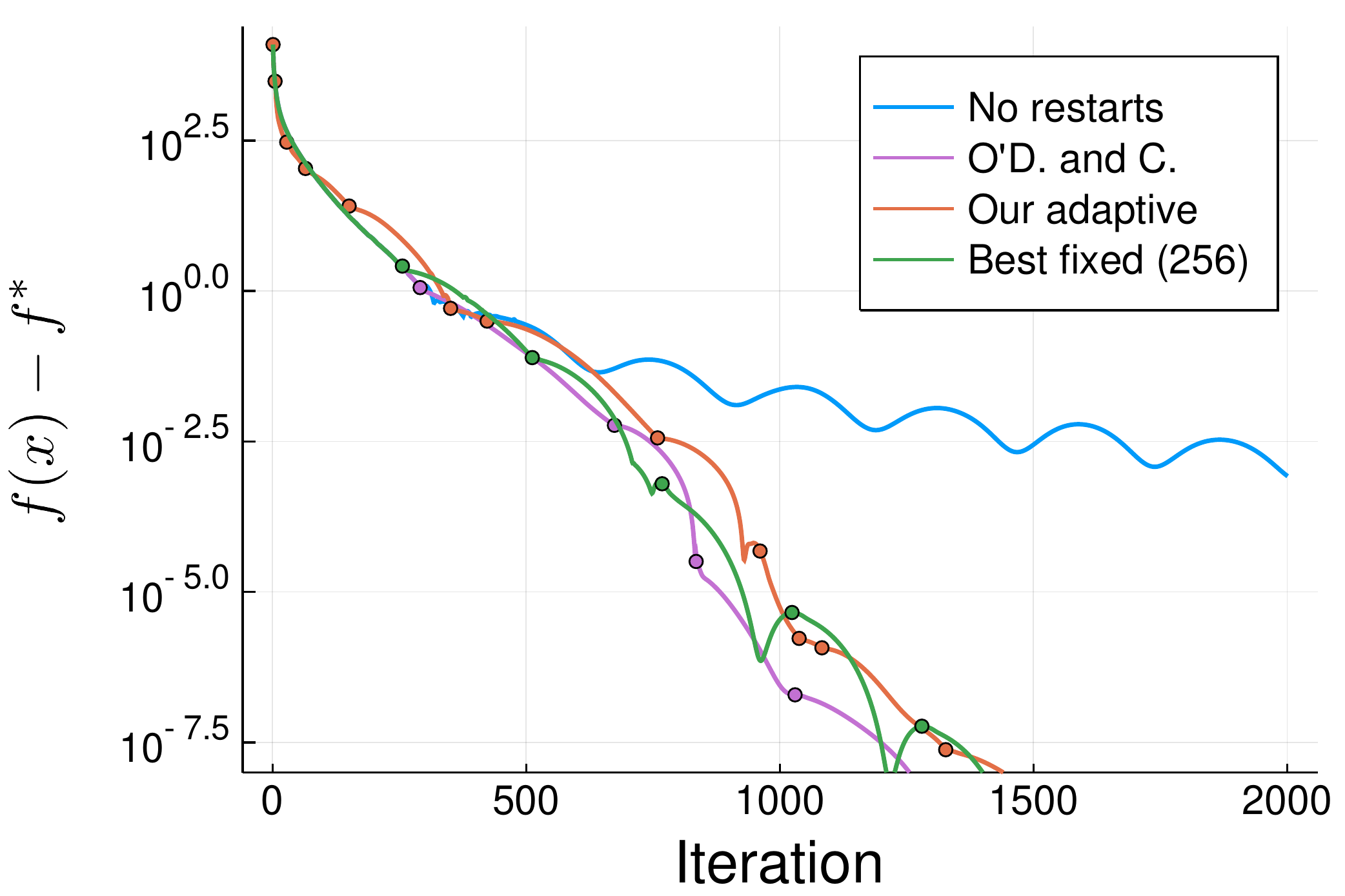}
    \includegraphics[scale=0.21]{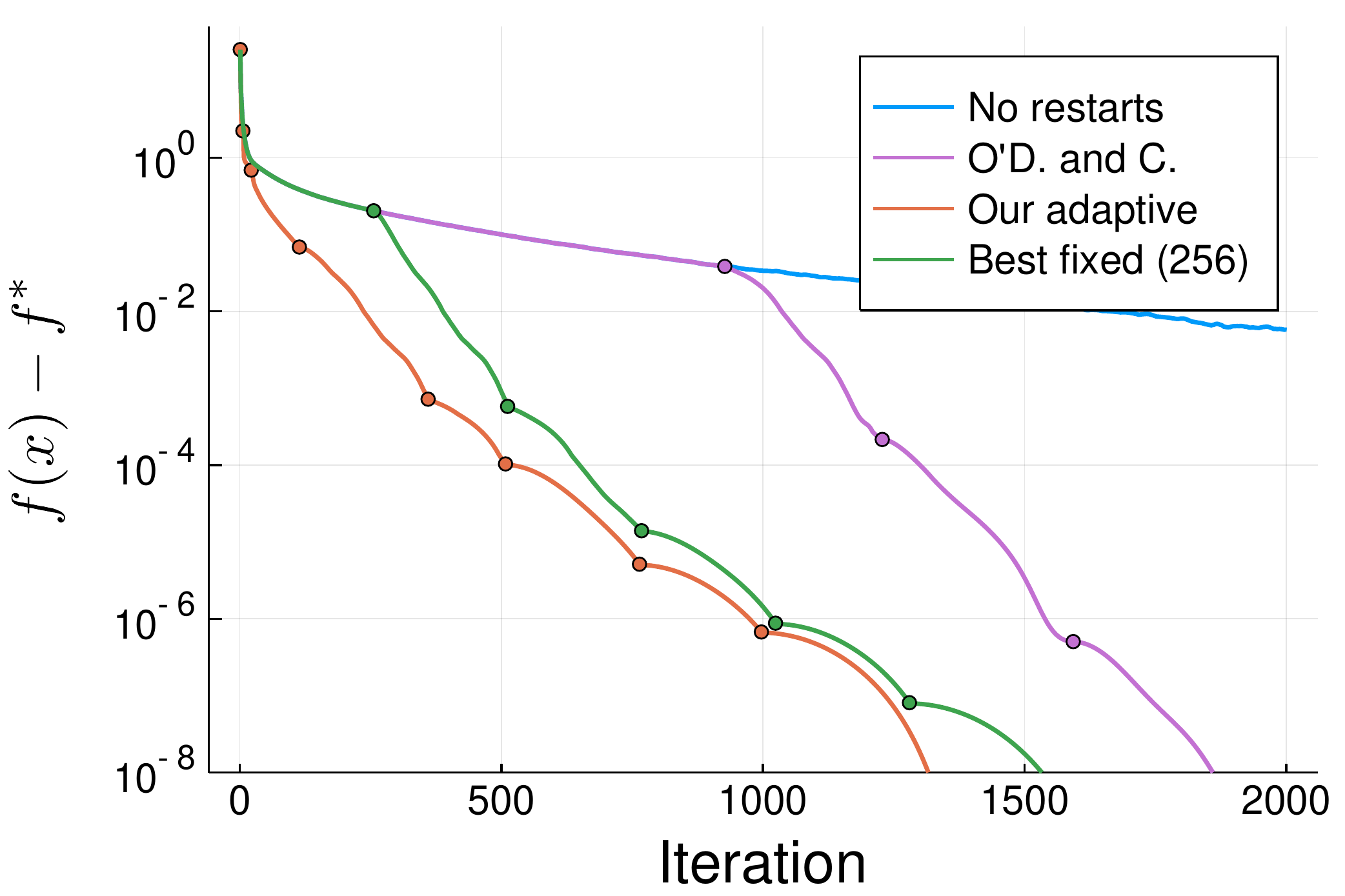}
    \caption{Tests on AGD. From left to right: E2006-tfidf (LASSO), rcv1.binary (logistic), Duke breast cancer (logistic). The best fixed restart period is found via grid search on $\{ 128, 256, 512, 1024, 2048 \}$. Dots indicate  restarts. ``O'D and C'' is the scheme of \cite{o2015adaptive}. }\label{AGD:results}
\end{figure}

\begin{ack}
We acknowledge David Applegate, Yair Carmon, Brendan O’Donoghue, Haihao Lu, and Warren Schudy for discussions on this work, and in particular: Haihao Lu for exchanges on the formulation of Assumption~\ref{assume:error-bound}, Warren Schudy for demonstrating the value of averaging in practice for saddle point algorithms, and David Applegate for comments on a draft of this paper.

\end{ack}

\bibliography{bio.bib}

\newpage

\appendix

\section{Proofs from Section~\ref{assume:required-for-adaptive}}

\subsection{Proof of Lemma~\ref{lem:Delta-r-marginal-gain}}\label{sec:proof-of:lem:Delta-r-marginal-gain}

\begin{proof}
Suppose that $r_a \ge r_b$ then it immediately follows that
$$
\sup_{(x,y) \in \ball{r_b}{w}} f(w_x, y) - f(x, w_y) \le \sup_{(x,y) \in \ball{r_a}{w}} f(w_x, y) - f(x, w_y).
$$
Therefore consider the case that $r_a \le r_{b}$.
Let $(x_a,y_a) \in \argmax_{(x,y) \in \ball{r_a}{w}} f(w_x,y) - f(x, w_y)$ and $(x_b,y_b) \in \argmax_{(x,y) \in \ball{r_b}{w}} f(w_x,y) - f(x, w_y)$.
By convexity $f(w_x + \lambda (x_b - w_x), w_y) \le (1 - \lambda) f(w) + \lambda f(x_b, w_y)$ with $\lambda = r_a / r_b$.
Rearranging this inequality yields:
\begin{flalign*}
f(w) - f(x_b,w_y) &\le \frac{ f(w) - f(w_x + \lambda (x_b - w_x), w_y)}{\lambda}   \\
&\le \frac{f(w) -  f(x_a, w_y)}{\lambda} \\
&= \frac{r_b}{r_a} (f(w) -  f(x_a, w_y))
\end{flalign*}
where the second inequality uses that $(w_x + \lambda (x_b - w_x), w_y) \in \ball{r_a}{x}$.
By the same argument (using concavity of $f$ in $y$ instead of convexity of $f$ in $x$),
$$
f(w_x, y_b) - f(w) \le \frac{r_b}{r_a} (f(w_x, y_a) -  f(w))
$$
Adding these inequalities together yields,
$$
f(w_x, y_b) - f(x_b, w_y) \le \frac{r_b}{r_a} (f(w_x, y_a) -  f(x_a, w_y))
$$
as required.
\end{proof}

\subsection{Proof of Lemma~\ref{lem:minimum-nonsingular-value}}\label{sec:proof-of:lem:minimum-nonsingular-value}

\begin{proof}
Start by considering the special case that $c = 0$, $b = 0$, and $A$ is diagonal (possibly nonsquare). Later we will show the general case reduces to this. Let $D$ be a diagonal matrix with entries $D_{ii} = \begin{cases} 1 & A_{ii} \neq 0 \\ 0 & A_{ii} = 0 \end{cases}$. Let $X^{*} \times Y^{*}$ denote the set of saddle points for this problem. Then
$X^{*} = \{ x : A x = 0 \} =  \{ x : D x = 0 \}$,  
$Y^{*} = \{ y : y^T A = 0 \} = \{ y : y^T D = 0 \}$.
In which case 
$$\| W^{*} - w \|_2 = \left\| \begin{pmatrix}
D w_x \\
D^T w_y
\end{pmatrix}
\right\|_2,$$
which implies that with $r^{*} = \| W^{*} - w \|_2$ that
\begin{flalign*}
\Delta_r(w) &= \sup_{(x,y) \in \ball{r^{*}}{w}} f(w_x, y) - f(x, w_y) \\
&= \sup_{(x,y) \in \ball{r^{*}}{w}} y^T A w_x-  x^T A^T w_y \\
&= \sup_{(x,y) \in \ball{r^{*}}{w}} (y - w_y)^T A w_x-  (x - w_x)^T A^T w_y \\
&= \sup_{(x,y) \in \ball{r^{*}}{\mathbf{0}}} y^T A w_x-  x^T A^T w_y \\
&= r^{*} \left\| \begin{pmatrix}
A w_x \\
A^T w_y
\end{pmatrix}
\right\|_2 \\
&\ge \sigmaMin r^{*}  \left\| \begin{pmatrix}
D w_x \\
D^T w_y
\end{pmatrix}
\right\|_2 \\
&= \sigmaMin (r^{*})^2.
\end{flalign*}
Next, consider the general case. We will reduce it to the case we just analyzed by shifting and rotating the space. In particular, consider the singular value decomposition $A = U \Sigma V^{T}$ where $\Sigma$ is an $n \times m$ diagonal matrix, and the matrices $U$ and $V$ are orthogonal. Define
$\bar{x} = V^T (x - x^{*})$ and $\bar{y} = U^T (y - y^{*})$. Therefore, 
\begin{flalign*}
&c^T x + y^T A x + b^T y \\
&=c^T x + (y - y^{*})^T A (x - x^{*}) + b^Ty + (y^{*})^T A x + y^T A x^{*} - (y^{*})^T A x^{*} \\
&= c^T x + (y - y^{*})^T A (x - x^{*}) + b^Ty - c^Tx - b^T y - (y^{*})^T A x^{*} \\
&= \bar{y}^T \Sigma \bar{x} - (y^{*})^T A x^{*}
\end{flalign*}
where the third inequality uses that $A x^{*} = -b$ and $(y^{*})^T A = -c$.
Note that because $U$ and $V$ are orthogonal: (i) $(\bar{y},\bar{x})$ is a saddle point for $\bar{y}^T \Sigma \bar{x}$ if and only if $(x,y)$ is a saddle point for $f$, (ii) $\| \bar{x} - \bar{x}' \|_2 = \| V^T (x - x') \|_2 =  \| x - x' \|_2$ and $\| \bar{y} - \bar{y}' \|_2 =  \| U^T (y - y') \|_2 = \| y - y' \|_2$ for $\bar{x}' = V^T (x - x^{*})$ and $\bar{y}' = U^T (y - y^*)$. It follows that the result holds in the general case.
\end{proof}

\section{Proof of results from Section~\ref{sec:proof-of-convergence}}

\subsection{Proof of Lemma~\ref{lem:decrease-distance-potential-function-advanced}}\label{app:proof-of:lem:decrease-distance-potential-function-advanced}

\begin{lemma}\label{lem:upper-bound-r-i-by-r-i-star}
Consider the sequence $\{w^t\}_{t=0}^{\infty}$ generated by \callGenericIterativeAlgorithm{$\outerIterate$}. 
Suppose that $t \in \N$ and Assumption~\ref{assume:contraction} holds then $\| w^{t} - \outerIterate \| \le (1 + Q(t)) \| W^{*} - \outerIterate \|$.
\end{lemma}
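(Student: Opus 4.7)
The plan is to apply the triangle inequality to insert an arbitrary optimal point $w^{*} \in W^{*}$ between $w^{t}$ and $\outerIterate$, then use Assumption~\ref{assume:contraction} to control the term $\| w^{t} - w^{*} \|$ in terms of $\| \outerIterate - w^{*} \|$, and finally take an infimum over $w^{*} \in W^{*}$ to convert $\| \outerIterate - w^{*} \|$ into $\| W^{*} - \outerIterate \|$.

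More concretely, I would fix any $w^{*} \in W^{*}$ and write
$\| w^{t} - \outerIterate \| \le \| w^{t} - w^{*} \| + \| w^{*} - \outerIterate \|$
by the triangle inequality. Applying Assumption~\ref{assume:contraction} to the first term gives $\| w^{t} - w^{*} \| \le Q(t) \| \outerIterate - w^{*} \|$, so combining yields $\| w^{t} - \outerIterate \| \le (1 + Q(t)) \| \outerIterate - w^{*} \|$. Since this holds for every $w^{*} \in W^{*}$, I would take the infimum over $w^{*} \in W^{*}$ on the right-hand side, which by the definition $\| W^{*} - \outerIterate \| = \inf_{w^{*} \in W^{*}} \| w^{*} - \outerIterate \|$ from the notation section produces the stated bound.

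There is essentially no main obstacle here: the lemma is a direct consequence of the triangle inequality and the contraction hypothesis. The only minor subtlety is that Assumption~\ref{assume:contraction} is stated for all $w^{*} \in W^{*}$ simultaneously (with the same $Q(t)$), which is exactly what is needed to take the infimum at the end. Should $W^{*}$ be empty, the inequality is vacuous via the convention $\inf \emptyset = +\infty$, so no separate case is required.
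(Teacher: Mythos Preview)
Your proposal is correct and essentially matches the paper's proof: the paper also applies the triangle inequality $\| w^{t} - \outerIterate \| \le \| w^{t} - w^{*} \| + \| w^{*} - \outerIterate \|$ and then invokes Assumption~\ref{assume:contraction}. The only cosmetic difference is that the paper directly picks $w^{*} := \argmin_{w \in W^{*}} \| w - \outerIterate \|$ rather than taking an infimum at the end, but this is the same argument.
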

\begin{proof}
Let $w^{*} := \argmin_{w \in W^{*}} \| w - \outerIterate \|$. By the triangle inequality and definition of $Q(t)$ (as given in Assumption~\ref{assume:contraction}),
$\| w^{t} - \outerIterate \| \le \| w^{t} - w^{*} \| + \| w^{*} - \outerIterate \| \le (1 + Q(t)) \| \outerIterate - w^{*} \|$.
\end{proof}

\begin{lemma}\label{lemma:decrease-distance-potential-function}
Consider  \callGenericAdaptiveRestartScheme{}.
Suppose that Assumptions~\ref{assume-Delta-r-marginal-gain}, \ref{assume:reduce-potential-function}, \ref{assume:contraction}, and \ref{assume:error-bound} hold. If $t \in \N$ is such that $\phi(t) \ge (1+Q(t)) \frac{(1 + \beta)}{\beta} \max\left\{ \sqrt{ \hat{\kappa} \phi(\tau_{i-1}) },  \hat{\kappa} \frac{1 + \beta}{\beta} \right\}$ then
$\frac{\| w_i^t - \outerIterate_{i-1} \|}{\phi(t)} \le \beta \frac{\| \outerIterate_{i-1} - \outerIterate_{i-2} \|}{\phi(\tau_{i-1})}$.
\end{lemma}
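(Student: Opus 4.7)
The plan is to chain two previously established bounds: one controlling the distance traveled during the current epoch (from $\outerIterate_{i-1}$ to $w_i^t$), and one controlling how close $\outerIterate_{i-1}$ got to $W^*$ during the previous epoch (starting from $\outerIterate_{i-2}$ and running for $\tau_{i-1}$ iterations). The hypothesis on $\phi(t)$ involves a max of two terms, and I expect these to correspond exactly to the two cases inside Lemma~\ref{lemma:bound-r-star-new-by-r-old}.

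First I would apply Lemma~\ref{lem:upper-bound-r-i-by-r-i-star} to the current epoch with $\outerIterate = \outerIterate_{i-1}$ and $w^t = w_i^t$, yielding
$$\| w_i^t - \outerIterate_{i-1} \| \le (1 + Q(t))\, \| W^* - \outerIterate_{i-1} \|.$$
Then I would apply Lemma~\ref{lemma:bound-r-star-new-by-r-old} to the previous epoch with $\outerIterate = \outerIterate_{i-2}$, using the fact that the iterate at step $\tau_{i-1}$ of that epoch is exactly $\outerIterate_{i-1}$. This gives a piecewise bound on $\| W^* - \outerIterate_{i-1} \|$ in terms of $\| \outerIterate_{i-1} - \outerIterate_{i-2} \|$, and then I split into the two corresponding cases.

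In Case~1 (when $\| W^* - \outerIterate_{i-1} \| \ge \beta \| \outerIterate_{i-1} - \outerIterate_{i-2} \|$), the first branch of Lemma~\ref{lemma:bound-r-star-new-by-r-old} combined with the step above yields $\| w_i^t - \outerIterate_{i-1} \| \le (1+Q(t)) \frac{(1+\beta)^2}{\beta} \frac{\hat{\kappa}}{\phi(\tau_{i-1})} \| \outerIterate_{i-1} - \outerIterate_{i-2} \|$. Dividing by $\phi(t)$, the desired inequality reduces to $\phi(t) \ge (1+Q(t)) \left(\frac{1+\beta}{\beta}\right)^{2} \hat{\kappa}$, which matches the second argument of the max in the hypothesis. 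In Case~2 (the complementary regime), the square-root branch of Lemma~\ref{lemma:bound-r-star-new-by-r-old} gives $\| w_i^t - \outerIterate_{i-1} \| \le (1+Q(t))(1+\beta)\sqrt{\hat{\kappa}/\phi(\tau_{i-1})}\, \| \outerIterate_{i-1} - \outerIterate_{i-2} \|$; dividing by $\phi(t)$ reduces the claim to $\phi(t) \ge (1+Q(t)) \frac{1+\beta}{\beta} \sqrt{\hat{\kappa}\, \phi(\tau_{i-1})}$, which matches the first argument of the max.

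The proof is therefore essentially bookkeeping — the substantive work is already done in Lemmas~\ref{lemma:bound-r-star-new-by-r-old} and~\ref{lem:upper-bound-r-i-by-r-i-star}. The only step requiring care is verifying that the two cases of Lemma~\ref{lemma:bound-r-star-new-by-r-old} are precisely dual to the two terms inside the max, so that the single hypothesis covers both branches simultaneously without needing to know which one is active; this is the step I would double-check, since an off-by-one on the $(1+\beta)/\beta$ factors is easy to make when chaining two piecewise bounds.
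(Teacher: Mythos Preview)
Your proposal is correct and matches the paper's proof essentially step for step: the paper also splits into the two cases of Lemma~\ref{lemma:bound-r-star-new-by-r-old} applied at $(\outerIterate_{i-2},\tau_{i-1})$, then appends Lemma~\ref{lem:upper-bound-r-i-by-r-i-star} to pass from $\|W^{*}-\outerIterate_{i-1}\|$ to $\|w_i^t-\outerIterate_{i-1}\|$. The only cosmetic difference is ordering---the paper first bounds $\|W^{*}-\outerIterate_{i-1}\|/\phi(t)$ and applies Lemma~\ref{lem:upper-bound-r-i-by-r-i-star} at the very end, whereas you apply it first---and your verification that the two branches reduce exactly to the two arguments of the $\max$ is accurate.
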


\begin{proof}
If $\| W^{*} - \outerIterate_{i-1} \| \ge \beta \| \outerIterate_{i-1} - \outerIterate_{i-2} \|$ then
\begin{flalign*}
\frac{\| W^{*} - \outerIterate_{i-1} \|}{\phi(t)} &\le \frac{1}{\phi(t)} \frac{ (1 + \beta)^2 \hat{\kappa}}{\beta \phi(\tau_{i-1})} \| \outerIterate_{i-1} - \outerIterate_{i-2} \| & \text{(Lemma~\ref{lemma:bound-r-star-new-by-r-old})} \\
&\le \beta \frac{\| \outerIterate_{i-1} - \outerIterate_{i-2} \|}{\phi(\tau_{i-1})  (1+Q(t))} & \text{(By assumed the lower bound on $\phi(t)$)}.
\end{flalign*}
If $\| W^{*} - \outerIterate_{i-1} \| < \beta \| \outerIterate_{i-1} - \outerIterate_{i-2} \|$ then
\begin{flalign*}
\frac{\| W^{*} - \outerIterate_{i-1} \|}{\phi(t)} &\le \frac{1}{\phi(t) } (1 + \beta) \sqrt{  \frac{\hat{\kappa}}{\phi(\tau_{i-1})} }  \| \outerIterate_{i-1} - \outerIterate_{i-2} \| & \text{(Lemma~\ref{lemma:bound-r-star-new-by-r-old})} \\
&\le \beta \frac{\| \outerIterate_{i-1} - \outerIterate_{i-2} \|}{\phi(\tau_{i-1}) (1+Q(t))} & \text{(By assumed the lower bound on $\phi(t)$)}.
\end{flalign*}
Therefore,
$$
\frac{\| w_t^i - \outerIterate_{i-1} \|}{\phi(t)} \le \frac{\| W^{*} - \outerIterate_{i-1} \| (1+Q(t))}{\phi(t)} \le  \beta \frac{\| \outerIterate_{i-1} - \outerIterate_{i-2} \|}{\phi(\tau_{i-1})}
$$
where the first inequality uses Lemma~\ref{lem:upper-bound-r-i-by-r-i-star}, and the second inequality uses the established bound.
\end{proof}

\begin{proof}[Proof of Lemma~\ref{lem:decrease-distance-potential-function-advanced}]
The inequality $\phi(t) \ge \max\{ \sqrt{ \phi(t^{*}-2) \phi(\tau_{i-1}) }, \phi(t^{*}-2) \}$ implies that $\phi(t) \ge \phi(t^{*} - 2)$.
Therefore $t \ge t^{*} - 2 \Rightarrow Q(t) \le Q(t^{*}-2)$. It follows by \eqref{define:t-star} and $0 \le Q(t) \le Q(t^{*}-2)$ that
\begin{flalign*}  (1+Q(t)) \frac{1 + \beta}{\beta} \max\left\{ \sqrt{ \hat{\kappa} \phi(\tau_{i-1}) },  \hat{\kappa} \frac{1 + \beta}{\beta} \right\} \le \max\{ \sqrt{ \phi(t^{*}-2) \phi(\tau_{i-1}) }, \phi(t^{*}-2) \}.
\end{flalign*}
The result follows by Lemma~\ref{lemma:decrease-distance-potential-function}.
\end{proof}

\subsection{Proof of Theorem~\ref{thm:main-result}}\label{app:proof-of-thm:main-result}

\begin{lemma}\label{lem:bound-large-t}
Suppose that Assumptions~\ref{assume-Delta-r-marginal-gain}, \ref{assume:reduce-potential-function}, \ref{assume:contraction}, and~\ref{assume:error-bound} hold. Then, \callGenericAdaptiveRestartScheme{} generates a sequence $\{ \tau_i \}_{i=1}^{\infty}$ satisfying
$\tau_i \le \max\{ \tau_1, t^{*} \}$ and 
$\sum_{j=1}^{\infty} (\tau_j - t^{*})^{+} \le 2 (\tau_1 - t^{*})^{+}$.
\end{lemma}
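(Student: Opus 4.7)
My plan is to iterate the bound provided by Lemma~\ref{lem:decrease-distance-potential-function-advanced}. For each $i \ge 2$ let $g(\tau)$ denote the smallest $t \in \N$ satisfying $\phi(t) \ge \max\{\sqrt{\phi(t^{*}-2)\phi(\tau)},\ \phi(t^{*}-2)\}$; the lemma guarantees that the restart triggers by iteration $g(\tau_{i-1})$, so $\tau_i \le g(\tau_{i-1})$ and the whole argument reduces to bounding $g$.

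First I would bound $g(\tau)$ by splitting into two cases according to which term of the max dominates. If $\tau \le t^{*}-2$, then by monotonicity of $\phi$ the max equals $\phi(t^{*}-2)$, and $g(\tau) \le \lceil t^{*}-2 \rceil \le t^{*}$. If $\tau \ge t^{*}-2$, the max equals $\sqrt{\phi(t^{*}-2)\phi(\tau)}$; here log-concavity of $\phi$ (Assumption~\ref{assume:reduce-potential-function}) yields $\phi\bigl(\tfrac{(t^{*}-2)+\tau}{2}\bigr) \ge \sqrt{\phi(t^{*}-2)\phi(\tau)}$, and $\lceil x\rceil \le x+1$ then gives $g(\tau) \le \lceil \tfrac{(t^{*}-2)+\tau}{2} \rceil \le \tfrac{t^{*}+\tau}{2}$. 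Combining, I obtain the single recursion $\tau_i \le \max\bigl\{t^{*},\ \tfrac{t^{*}+\tau_{i-1}}{2}\bigr\}$ for all $i \ge 2$.

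The first claim follows by a one-line induction: if $\tau_{i-1} \le \max\{\tau_1, t^{*}\}$, both entries of the max are bounded by $\max\{\tau_1, t^{*}\}$, so $\tau_i \le \max\{\tau_1, t^{*}\}$ as well. For the second claim, I would set $\delta_i := (\tau_i - t^{*})^+$ and read off from the recursion that $\delta_i \le \delta_{i-1}/2$: if $\tau_{i-1} \le t^{*}$ then $\tau_i \le t^{*}$ and $\delta_i = 0$, while if $\tau_{i-1} > t^{*}$ then $\tau_i - t^{*} \le (\tau_{i-1} - t^{*})/2$. Iterating gives $\delta_j \le 2^{-(j-1)}\delta_1$, and the geometric sum yields $\sum_{j=1}^{\infty}\delta_j \le 2\delta_1 = 2(\tau_1 - t^{*})^+$.

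The main point of care is the interplay between the integer iteration index and the real-valued $t^{*}$: the $-2$ shift baked into the definition~\eqref{define:t-star} of $t^{*}$ is exactly what absorbs the $+1$ introduced by the ceiling, so that the factor-of-two contraction on $\delta_i$ is preserved undamaged. Everything else is routine bookkeeping.
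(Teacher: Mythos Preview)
Your proposal is correct and follows essentially the same route as the paper: both arguments use Lemma~\ref{lem:decrease-distance-potential-function-advanced} together with log-concavity of $\phi$ to derive the recursion $\tau_i \le \max\{t^{*},\,(t^{*}+\tau_{i-1})/2\}$, then finish by induction and a geometric sum on $(\tau_i - t^{*})^{+}$. The only cosmetic difference is that the paper works directly with the contrapositive ($\phi(\tau_i-1)$ must lie \emph{below} the threshold since the restart was not yet triggered), whereas you introduce $g(\tau)$ and handle the integer/real gap via an explicit ceiling; your observation that the ``$-2$'' in~\eqref{define:t-star} absorbs the ceiling's ``$+1$'' is exactly the mechanism at work.
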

\begin{proof}
By Lemma~\ref{lem:decrease-distance-potential-function-advanced} and \eqref{eq:our-restart-condition} we have
\begin{flalign*}
\phi(\tau_{i} - 1) < \max\left\{ \phi(t^{*}-2), \sqrt{ \phi(t^{*}-1) \phi(\tau_{i-1}) } \right\} \le  \max\left\{ \phi(t^{*} - 2), \phi\left(\frac{\tau_{i-1} + t^{*}}{2} - 1 \right) \right\}.
\end{flalign*}
If $\tau_{i-1} \le t^{*} - 2$ then
$\phi(\tau_{i} - 1) < \phi(t^{*} - 2) \Rightarrow \tau_i - 1 < t^{*} - 2 \Rightarrow \tau_i < t^{*} - 1$.
If $\tau_{i-1} > t^{*} - 2$ then we have
$\phi(\tau_i - 1) < \phi\left(\frac{\tau_{i-1} + t^{*}}{2} - 1 \right)$ which by monotonicity of $\phi$ implies 
\begin{flalign}
\label{eq:tau-i-inductive-upper-bound}
\tau_i \le \frac{\tau_{i-1} + t^{*}}{2} 
\end{flalign}
If $\tau_{i-1} \le t^{*}$ then \eqref{eq:tau-i-inductive-upper-bound} implies $\tau_i \le t^{*}$.
Subtracting $t^{*}$ from \eqref{eq:tau-i-inductive-upper-bound} and using induction it follows that if $\tau_j > t^{*}$ for all $j \in \{1, \dots, i\}$ then
$\tau_j - t^{*} \le \frac{\tau_{j-1} - t^{*}}{2} 
\le \frac{\tau_{1} - t^{*}}{2^{j-1}}$. This  implies for all $n \in \N$ that
$\sum_{i=1}^{n}(\tau_i - t^{*})^{+} \le (\tau_1 - t^{*})^{+} \sum_{i=1}^{n} 2^{1-i} \le 2 (\tau_1 - t^{*})^{+}$,
where the last inequality uses the standard bound on the sum of a geometric series.
\end{proof}

\begin{lemma}\label{lem:t-hat-bound}
Consider the sequence $\{ \outerIterate_i \}_{i=0}^{\infty}$ generated by \callGenericAdaptiveRestartScheme{}.
Suppose that Assumptions~\ref{assume-Delta-r-marginal-gain}, \ref{assume:reduce-potential-function}, \ref{assume:contraction}, and~\ref{assume:error-bound} hold.
Then for all $n \in \N$,
$\| W^{*} - \outerIterate_{n} \| \le \| \outerIterate_1 - \outerIterate_{0} \|  \max\left\{ 1, \frac{ \phi( t^{*})}{\phi(\tau_1)} \right\} \beta^{n}$.
\end{lemma}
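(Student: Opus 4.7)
The plan is to reduce the bound on $\| W^{*} - \outerIterate_n \|$ to a bound on the per-epoch displacement $\| \outerIterate_n - \outerIterate_{n-1} \|$ by invoking Lemma~\ref{lemma:bound-r-star-new-by-r-old}, and then to control the displacement by iterating the restart condition \eqref{eq:our-restart-condition} itself.

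First I would observe that since the $i$th epoch of \callGenericAdaptiveRestartScheme{} exits as soon as \eqref{eq:our-restart-condition} holds and sets $\outerIterate_i \gets w_i^{\tau_i}$, we have
$\frac{\| \outerIterate_i - \outerIterate_{i-1} \|}{\phi(\tau_i)} \le \beta \frac{\| \outerIterate_{i-1} - \outerIterate_{i-2} \|}{\phi(\tau_{i-1})}$
for every $i \ge 2$. A straightforward induction on $i$ then gives, for every $n \ge 1$,
$\| \outerIterate_n - \outerIterate_{n-1} \| \le \beta^{n-1} \frac{\phi(\tau_n)}{\phi(\tau_1)} \| \outerIterate_1 - \outerIterate_0 \|$,
the base case $n=1$ being a trivial equality.

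Next I would apply Lemma~\ref{lemma:bound-r-star-new-by-r-old} with $\outerIterate \gets \outerIterate_{n-1}$ and $w^t \gets \outerIterate_n$ (so $t = \tau_n$), which produces a two-case bound on $\| W^{*} - \outerIterate_n \|$ in terms of the displacement. In the ``far'' case $\| W^{*} - \outerIterate_n \| \ge \beta \| \outerIterate_n - \outerIterate_{n-1} \|$, combining the first branch of the lemma with the displacement bound yields
$\| W^{*} - \outerIterate_n \| \le \frac{(1+\beta)^2 \hat{\kappa}}{\beta^2} \cdot \frac{\beta^n \| \outerIterate_1 - \outerIterate_0 \|}{\phi(\tau_1)}$,
after the $\phi(\tau_n)$ factors cancel. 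The definition \eqref{define:t-star}, together with $Q(t^*-2) \ge 0$ and monotonicity of $\phi$, gives $\phi(t^*) \ge \phi(t^*-2) \ge \frac{(1+\beta)^2 \hat{\kappa}}{\beta^2}$; substituting shows this bound is at most $\frac{\phi(t^*)}{\phi(\tau_1)} \beta^n \| \outerIterate_1 - \outerIterate_0 \|$, which is within the claimed bound.

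The main obstacle is the ``near'' case $\| W^{*} - \outerIterate_n \| < \beta \| \outerIterate_n - \outerIterate_{n-1} \|$, where using the case condition directly together with the displacement bound only yields
$\| W^{*} - \outerIterate_n \| < \beta^n \frac{\phi(\tau_n)}{\phi(\tau_1)} \| \outerIterate_1 - \outerIterate_0 \|$,
and the factor $\phi(\tau_n)/\phi(\tau_1)$ is not a priori bounded by $\max\{1, \phi(t^*)/\phi(\tau_1)\}$. This is precisely where Lemma~\ref{lem:bound-large-t} earns its keep: it guarantees $\tau_n \le \max\{\tau_1, t^*\}$, so by monotonicity of $\phi$ we obtain $\phi(\tau_n) \le \max\{\phi(\tau_1), \phi(t^*)\}$, and dividing by $\phi(\tau_1)$ gives $\phi(\tau_n)/\phi(\tau_1) \le \max\{1, \phi(t^*)/\phi(\tau_1)\}$. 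Combining the two cases completes the proof.
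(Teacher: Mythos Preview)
Your proposal is correct and follows essentially the same route as the paper: derive the displacement bound $\| \outerIterate_n - \outerIterate_{n-1} \| \le \beta^{n-1}\frac{\phi(\tau_n)}{\phi(\tau_1)}\| \outerIterate_1 - \outerIterate_0 \|$ by iterating the restart condition, feed it through Lemma~\ref{lemma:bound-r-star-new-by-r-old} together with \eqref{define:t-star} in the ``far'' case, use the case inequality directly in the ``near'' case, and finish with Lemma~\ref{lem:bound-large-t} to cap $\phi(\tau_n)$. The paper merges the two cases into a single $\max\{1,\phi(t^{*})/\phi(\tau_n)\}$ factor before multiplying by the displacement bound, whereas you keep them separate and cancel $\phi(\tau_n)$ earlier in the far case, but this is purely cosmetic.
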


\begin{proof}
By definition of \callGenericAdaptiveRestartScheme{}, for all $i \in \N$ with $i > 1$,
$\frac{\| \outerIterate_{i} - \outerIterate_{i-1} \|}{\phi(\tau_i)} \le \beta \frac{\| \outerIterate_{i-1} - \outerIterate_{i-2} \|}{\phi(\tau_{i-1})}$ which by induction implies 
\begin{flalign}\label{eq:omega-difference-bound}
\| \outerIterate_{n} - \outerIterate_{n-1} \| \le \| \outerIterate_{1} - \outerIterate_{0} \|  \frac{\phi(\tau_{n})}{\phi(\tau_{1})} \beta^{n-1}.
\end{flalign}
Moreover, in the case that $\| \outerIterate_{n} - \outerIterate_{n-1} \| \ge \beta \| \outerIterate_{n} - \outerIterate_{n-1} \|$ we have
\begin{flalign*}
\| W^{*} - \outerIterate_{n} \| &\le \frac{(1 + \beta)^2}{\beta} \frac{\hat{\kappa}  }{\phi(\tau_{n})} \| \outerIterate_{n} - \outerIterate_{n-1} \| & \text{by Lemma~\ref{lemma:bound-r-star-new-by-r-old},} \\
& \le \beta  \frac{\phi(t^{*})  }{\phi(\tau_{n})} \| \outerIterate_{n} - \outerIterate_{n-1} \| & \text{by \eqref{define:t-star}.} \\
\end{flalign*}
Therefore, in the general case
\begin{flalign*}
\| W^{*} - \outerIterate_{n} \| &\le \beta  \max\left\{ 1, \frac{\phi(t^{*})  }{\phi(\tau_{n})}  \right\} \| \outerIterate_{n} - \outerIterate_{n-1} \|.
\end{flalign*}

By the previous inequality and \eqref{eq:omega-difference-bound} we deduce
$$
\| W^{*} - \outerIterate_{n} \| \le \| \outerIterate_{1} - \outerIterate_{0} \|  \beta \max\left\{ 1, \frac{\phi(t^{*})  }{\phi(\tau_{n})}  \right\}  \frac{\phi(\tau_{n})}{\phi(\tau_{1})} \beta^{n} = \| \outerIterate_{1} - \outerIterate_{0} \| \max\left\{ \frac{\phi(\tau_{n})}{\phi(\tau_{1})}, \frac{\phi(t^{*})  }{\phi(\tau_{1})}  \right\} \beta^{n}.
$$
Lemma~\ref{lem:bound-large-t} implies that either $\tau_{n} \le \tau_1$ or $\tau_{n} \le t^{*}$. If 
$\tau_{n} \le \tau_1$ then by monotonicity $\frac{\phi(\tau_{n})}{\phi(\tau_{1})} \le 1$.
If $\tau_{n} \le t^{*}$ then by monotonicity $\phi(\tau_{n}) \le \phi(t^{*})$. In both cases, $\max\left\{ \frac{\phi(\tau_{n})}{\phi(\tau_{1})}, \frac{\phi(t^{*})  }{\phi(\tau_{1})}  \right\} \le \max\left\{ 1, \frac{\phi(t^{*})  }{\phi(\tau_{1})}  \right\}$.
\end{proof}

\begin{proof}[Proof of Theorem~\ref{thm:main-result}]
We have
\begin{flalign*}
\| W^{*} - \outerIterate_{n} \| &\le \| \outerIterate_{1} - \outerIterate_{0} \| \max\left\{ 1, \frac{ \phi( t^{*})}{\phi(\tau_1)} \right\} \beta^{n} & \text{by Lemma~\ref{lem:t-hat-bound}} \\
&\le  \| \outerIterate_{1} - \outerIterate_{0} \| \frac{\epsilon}{1 + Q(\tau_1)}  & \text{by definition of $n$} \\
&\le \epsilon \| W^{*} - \outerIterate_{0} \| & \text{by Lemma~\ref{lem:upper-bound-r-i-by-r-i-star}}
\end{flalign*}
If $\tau_1 \le t^{*}$ then by Lemma~\ref{lem:t-hat-bound} we deduce that $\tau_i \le t^{*}$ for all $i$. If $\tau_1 > t^{*}$ then using Lemma~\ref{lem:bound-large-t} we deduce that
$\sum_{i=1}^{n} \tau_i \le
t^{*} n + \sum_{i=1}^{n} (\tau_i - t^{*})^{+}
\le t^{*} n + 2 (\tau_1 - t^{*})$.
\end{proof}

\section{Supplementary material for Section~\ref{sec:theory-to-specific}}\label{appendix:theory-to-specific}

\subsection{Accelerated gradient descent}\label{appendix:theory-to-specific:agd}

\begin{lemma}\label{lem:std-agd-result}
Let $f$ be convex and $L$-smooth with $L \ge \ell_{0}^{0}$.
At any inner iteration of \callAdaptiveRestartAGD{} and for all $x \in X$ the following inequality holds,
$$
f(w_{i}^{t}) - f(x) \le \frac{2 \backtrackingfactor L \| \outerIterate_{i-1} - x \|_2^2}{(t+1)^2}.
$$
\end{lemma}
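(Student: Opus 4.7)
The plan is to reduce this to the standard convergence analysis of FISTA with backtracking line search (Beck and Teboulle, Theorem 4.4), applied within a single restart epoch. Inside epoch $i$, \callAdaptiveRestartAGD{} is reinitialized with $w_i^0 = \outerIterate_{i-1}$, extrapolation parameter $t_1 = 1$, and line-search parameter $\ell_i^0 \le \ell_0^0 \le L$, so the inner loop is exactly FISTA started at $\outerIterate_{i-1}$. The lemma therefore follows once we re-derive the usual FISTA sublinear bound for this inner run.

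First I would control the backtracking step size: since $\SmoothFunction$ is $L$-smooth, the descent inequality used as the line-search acceptance criterion is satisfied for every trial step size $\ell \ge L$, and since the search multiplies the current trial value by $\backtrackingfactor$ until acceptance, every accepted $\ell_i^{k}$ satisfies $\ell_i^{k} \le \backtrackingfactor L$. Next I would set up the standard FISTA potential. Let $t_k$ be the extrapolation coefficients produced by the algorithm (with $t_1 = 1$ and $t_{k+1}^2 - t_{k+1} = t_k^2$), and define the auxiliary sequence $u_k$ by $t_{k+1} u_{k+1} := t_{k+1} w_i^{k+1} + (t_k - 1)(w_i^{k+1} - w_i^{k})$, with $u_0 := \outerIterate_{i-1}$. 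Using the prox inequality characterizing $p_{\ell_i^{k}}$ and the line-search acceptance condition, the key one-step estimate
\begin{equation*}
\tfrac{2}{\ell_i^{k}}\!\left(t_{k+1}^2\bigl(f(w_i^{k+1}) - f(x)\bigr) - t_k^2\bigl(f(w_i^{k}) - f(x)\bigr)\right) \le \|u_k - x\|_2^2 - \|u_{k+1} - x\|_2^2
\end{equation*}
holds for every $x \in X$. Telescoping from $k=0$ to $k=t-1$ and applying the uniform bound $\ell_i^{k} \le \backtrackingfactor L$ on the left gives $\tfrac{2}{\backtrackingfactor L} t_t^{2}(f(w_i^t) - f(x)) \le \|\outerIterate_{i-1} - x\|_2^2$. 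Finally, a one-line induction on the recursion $t_{k+1}^2 - t_{k+1} = t_k^2$ with $t_1 = 1$ yields $t_t \ge (t+1)/2$, and substituting this into the previous display produces exactly the claimed bound $f(w_i^t) - f(x) \le \tfrac{2 \backtrackingfactor L \|\outerIterate_{i-1} - x\|_2^2}{(t+1)^2}$.

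The only mildly delicate point is handling the non-constant step sizes: in the textbook analysis with a fixed Lipschitz constant one pulls $2/L$ out of the sum cleanly, whereas here $\ell_i^{k}$ varies across inner iterations. The standard fix, which I would use, is to note that the backtracking scheme produces a non-decreasing sequence of trial values so that on the telescoped left-hand side one can uniformly replace $\ell_i^{k}$ by its upper bound $\backtrackingfactor L$; this is precisely how Beck and Teboulle handle it in their Theorem 4.4, so the lemma is essentially a restatement of that result in the notation of \callAdaptiveRestartAGD{}.
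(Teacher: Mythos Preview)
Your proposal is correct and takes essentially the same approach as the paper: both reduce the lemma to Theorem~4.4 of Beck and Teboulle, noting that its proof never uses that the comparison point is a minimizer and therefore holds for any $x \in X$. The paper simply cites that theorem with this observation, while you sketch the underlying potential-function argument; the content is the same.
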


\begin{proof}
See Theorem~4.4 of \cite{beck2009fast} and note that the proof does not use that $x^{*}$ is a minimizer, so we can replace it with any $x \in X$.
\end{proof}

\begin{algorithm}[htpb]
\label{function:AdaptiveRestartAGD}
\Fn{\InitializeAGD{$w$}}{
\Return $w, w, 1$\;
}
\Fn{\OneStepOfAGD{$w, v, \lambda, \ell, \backtrackingfactor$}}{
Find the smallest nonnegative integer $k$ such that with $\bar{\ell} = \backtrackingfactor^{k} \ell$
 $$
 f(p_{\bar{\ell}}(v)) \le \SmoothFunction(v) + (p_{\bar{\ell}}(v) - v)^T \grad \SmoothFunction(v) + \frac{\bar{\ell}}{2} \| p_{\bar{\ell}}(v) - v \|_2^2 + \ProxFunction(p_{\bar{\ell}}(v)).
 $$
Set $\ell_{+} = \backtrackingfactor^{k} \ell$ and
 compute 
 $w_{+} \gets p_{\ell_{+}}(v)$\;
 $\lambda_{+} \gets \frac{1 + \sqrt{1 + 4 \lambda^2}}{2}$\;
  $v_{+} = w_{+} +  \frac{\lambda-1}{\lambda_{+}} (w_{+} - w)$\;
 \Return $w_{+}, v_{+}, \lambda_{+}, \ell_{+}$
}
 \Fn{\AdaptiveRestartAGD{$\outerIterate_0, \ell_0^0, \eta$}}{
  \For{$i = 1, \dots, \infty$}{
 $w_{i}^0, v_{i}^0, \lambda_{i}^0 \gets $
\InitializeAGD{$\outerIterate_{i-1}, \ell_{i-1}^{0}$} \;
$t \gets 0$ \;
\Repeat{restart condition \eqref{eq:our-restart-condition} holds}{
$t \gets t + 1$\;
$w_{i}^t, v_{i}^t, \lambda_{i}^t, \ell_{i}^{t} \gets$ \OneStepOfAGD{$w_i^{t-1}, v_i^{t-1},  \lambda_i^{t-1}, \ell_{i}^{t-1}, \backtrackingfactor$} \;
}
$\tau_i \gets t$, $\outerIterate_i \gets w_i^t$\;
}
}
\end{algorithm}

\subsubsection{Proof of Theorem~\ref{thm:new-agd-bound} and Corollary~\ref{coro:new-agd-bound}}\label{sec:coro:thm:new-agd-bound}

\begin{proof}[Proof of Theorem~\ref{thm:new-agd-bound}]
Assumption~\ref{assume-Delta-r-marginal-gain} holds by Lemma~\ref{lem:Delta-r-marginal-gain}. 
By Fact~\ref{fact:saddle-point-conversion} and \ref{lem:std-agd-result}, Assumption~\ref{assume:reduce-potential-function} holds with $C = 2L \backtrackingfactor$ and $\phi(k) = (k+1)^{2}$. By Lemma~\ref{lem:std-agd-result} with $x = w^{*}$ and strong convexity,
$$
\frac{2 \backtrackingfactor L\| \outerIterate_{i-1} - w^{*} \|_2^2}{(t+1)^2} \ge f(w_{i}^t) - f(w^{*}) \ge \frac{\alpha}{2} \| w_{i}^{t} - w^{*} \|_2^2 \Rightarrow 2 \sqrt{\frac{\backtrackingfactor L}{\alpha (t + 1)^2}} \| \outerIterate_{i-1} - w^{*} \|_2  \ge \| w_{i}^{t} - w^{*} \|_2.
$$
Therefore, Assumption~\ref{assume:contraction} holds with $Q(t) = \frac{2}{t + 1} \sqrt{\bar{\kappa}}$.
Assumption~\ref{assume:error-bound} with $\theta=\alpha/2$ holds by $\alpha$-strong convexity. 

With the premise of Theorem~\ref{thm:main-result} established it only remains to make the value of $t^{*}$ explicit and simplify the bound. 
From the definition of $C$ and $\theta$,
$\hat{\kappa} = 4 \bar{\kappa}$.
We claim that
$$
t^{*} = 1 +  \sqrt{\hat{\kappa}} (\rho + \sqrt{\rho^2 + 4 \rho})
$$
is a solution to \eqref{define:t-star}. To establish this claim note that this value of $t^{*}$ implies that
\begin{flalign*}
& (t^{*} - 1)^2 - 2 \rho \sqrt{\hat{\kappa}} ( t^{*} - 1 ) - 4 \rho \hat{\kappa}  = 0 & \text{by the quadratic formula} \\
&\Rightarrow t^{*}-1 = 2 \rho \sqrt{\hat{\kappa}} \left( 1+ \frac{2 \sqrt{\hat{\kappa}}}{t^{*}-1} \right) & \text{moving terms to the RHS and dividing by $t^{*}-1$} \\
&\Rightarrow \frac{(t^{*} - 1)^2}{\left(1 + \frac{2 \sqrt{\bar{\kappa}}}{t^{*}-1}\right)^2} = 4 \rho^2 \hat{\kappa} & \text{squaring both sides and rearranging,}
\end{flalign*}
which establishes \eqref{define:t-star}.

As $\beta \in (0,1) \Rightarrow \rho \ge 2$ the previous equation implies $t^{*} \ge 2 \sqrt{\hat{\kappa}}$. Therefore
\begin{flalign*}
\left( 1 + \frac{2\sqrt{\bar{\kappa}} }{\tau_1 + 1} \right) \max\left\{ 1, \frac{ (t^{*} + 1)^2}{(\tau_1 + 1)^2} \right\} \le 2 \max\left\{ 1, \frac{(t^{*}+1)^3}{(\tau_1+1)^3} \right\}.
\end{flalign*}
\end{proof}

\begin{proof}[Proof of Corollary~\ref{coro:new-agd-bound}]
Follows by substituting $\beta=1/4$ into the bound in Theorem~\ref{thm:new-agd-bound}, which yields
$$
t^{*} = 1 +  (5 + 3 \sqrt{5}) \sqrt{\hat{\kappa}}
$$
$$
\frac{t^{*}}{\ln(4)} \le 1 + 8.5 \sqrt{\hat{\kappa}}.
$$
\end{proof}

\begin{remark}\label{remark:optimal-scheme}
It is worth contrasting the bound in Corollary~\ref{coro:new-agd-bound} with the bound that one achieves if $\bar{\kappa}$ is known and one picks the fixed restart period $T$ that minimizes the worst-case bound. By Lemma~\ref{lem:std-agd-result} and strong convexity, $\| w_{i+1} - w^{*} \|_2^2 \le \frac{4 \bar{\kappa}}{(T+1)^2}  \| w_i - w^{*} \|_2^2 \Rightarrow \frac{\| w_{n+1} - w^{*} \|_2^2}{ \| w_1 - w^{*} \|_2^2 } \le \left( \frac{4 \bar{\kappa}}{(T+1)^2} \right)^{n} \Rightarrow n \le \ln\left(\frac{\| w_1 - w^{*} \|_2}{\| w_{n+1} - w^{*} \|_2 } \right) / \ln\left( \frac{(T+1)^2}{4 \bar{\kappa}} \right)$, which implies that $\| w_{n+1} - w^{*} \|_2 \le \epsilon$ after at most
$T \ln\left(\frac{\| w_1 - w^{*} \|_2}{\e \epsilon} \right) / \ln\left( \frac{(T+1)^2}{4 \bar{\kappa}} \right)$ iterations. We can (approximately) minimize this upper bound with respect to $T$ by exactly minimizing $T \ln(\frac{T}{2 \sqrt{\bar{\kappa}}})$ yielding $T = 2 \e \sqrt{\bar{\kappa}}$ and an overall bound of $( 2 e \sqrt{\bar{\kappa}} + 1) \ln\left( \frac{\| w_1 - w^{*} \|}{\e \epsilon} \right)$.
\end{remark}

\subsection{Extragradient}\label{sec:extragradient}

This subsection is based on \citet{bubeck2015convex}. 
For general Bregman divergences this algorithm is called mirror prox; when the Euclidean norm is used (as is the case here) it is known as extragradient.

Define,
$$
g(w) := \begin{pmatrix}
\grad_x f(x,y) \\
-\grad_y f(x,y)
\end{pmatrix}.
$$

\begin{algorithm}[htpb]
\Fn{\InitializeExtragradient{$u$}}{
\Return $\mathbf{0}, u$\;
}
\Fn{\OneStepOfExtragradient{$\bar{u}, u, t, \PdhgStepSize$}}{
$v \gets \argmin_{w \in W} g(u)^T w + \frac{1}{\PdhgStepSize} \| w - u \|_2^2$ \;
$u^{+} \gets \argmin_{w \in W} g(v)^T w + \frac{1}{\PdhgStepSize} \| w - u \|_2^2$ \;
$\bar{u}^{+} \gets \frac{t-1}{t} \bar{u} + \frac{\bar{u}^{+}}{t}$\;
\Return $\bar{u}^{+}, u^{+}$
}
 \Fn{\AdaptiveRestartExtragradient{$\outerIterate_0, \PdhgStepSize$}}{
  \For{$i = 1, \dots, \infty$}{
 $w_{i}^{0}, u_{i}^{0} \gets $
\InitializeExtragradient{$\outerIterate_{i-1}$} \;
$t \gets 0$\;
\Repeat{restart condition \eqref{eq:our-restart-condition} holds}{
$t \gets t + 1$\;
$w_{i}^{t}, u_{i}^t \gets$ \OneStepOfExtragradient{$w_{i-1}^t, u_{i-1}^{t}, t, \PdhgStepSize$} \;
}
$\tau_i \gets t, \outerIterate_i \gets w_{i}^t$\;
}
}
\label{function:AdaptiveRestartExtragradient}
\end{algorithm}

Lemma~\ref{lem:extragradient-basic-result} follows the proof of \citet[Theorem~4.4]{bubeck2015convex}.
The original proof is given by \citet[Proposition 2.2.]{nemirovski2004prox}.

\begin{lemma}\label{lem:extragradient-basic-result}
Suppose $f(x,y)$ is $L$-smooth, convex in $x$ for all $y \in Y$, and concave in $y$ for all $x \in X$. 
Then \callAdaptiveRestartExtragradient{} with $\PdhgStepSize \in (0,1/L]$ satisfies,
$\sup_{(\tilde{x},\tilde{y}) \in \ball{R}{\bar{w}}} f(x_{i}^t, \tilde{y}) - f(\tilde{x}, y_{i}^t) \le \frac{R^2}{\PdhgStepSize t}$
where $(x_{i}^t, y_{i}^t) := w_{i}^t$, $(\bar{x}, \bar{y}) := \outerIterate_{i-1}$,
for all $R \in \R^{+}$. Furthermore,
$\| w_{i}^{t} - w^{*} \|_2 \le \| \outerIterate_{i-1} - w^{*} \|_2$
for all $w^{*} \in W^{*}$.
\end{lemma}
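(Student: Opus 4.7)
}
This is the classical mirror-prox / extragradient estimate of \cite{nemirovski2004prox}, specialised to the Euclidean geometry (see also \cite[Theorem~4.4]{bubeck2015convex}). The plan is to run that argument in our notation and read off both conclusions from a single one-step inequality.

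\textbf{Step 1 (one-step estimate).} Fix an inner iteration and let $u, v, u^{+}$ denote its iterates. The first-order optimality conditions of the two Euclidean prox subproblems give, for all $w \in W$,
\begin{align*}
2(v - u)^T(w - v) &\ge \PdhgStepSize \, g(u)^T(v - w), \\
2(u^{+} - u)^T(w - u^{+}) &\ge \PdhgStepSize \, g(v)^T(u^{+} - w).
\end{align*}
Use the decomposition
\begin{align*}
\PdhgStepSize \, g(v)^T(v - w) = \PdhgStepSize \, g(u)^T(v - u^{+}) + \PdhgStepSize \, (g(v) - g(u))^T(v - u^{+}) + \PdhgStepSize \, g(v)^T(u^{+} - w),
\end{align*}
apply the three-point identity to the first inequality (with $w = u^{+}$) and to the second inequality (with general $w$), and bound the cross term by $L \|v - u\|_2 \|v - u^{+}\|_2$ via $L$-Lipschitzness of $g$ (which follows from $L$-smoothness of $f$) together with Young's inequality. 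The step-size restriction $\PdhgStepSize L \le 1$ makes the cross term exactly absorbable into the negative $\|v - u\|_2^2$ and $\|v - u^{+}\|_2^2$ contributions produced by the two three-point identities, leaving the clean descent inequality
\begin{align*}
\PdhgStepSize \, g(v)^T(v - w) \le \|u - w\|_2^2 - \|u^{+} - w\|_2^2 \qquad \text{for all } w \in W.
\end{align*}

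\textbf{Step 2 (gap bound).} Convexity of $f(\cdot, v_y)$ and concavity of $f(v_x, \cdot)$ yield $g(v)^T(v - w) \ge f(v_x, w_y) - f(w_x, v_y)$ for every $w = (w_x, w_y) \in W$. Telescoping Step~1 across the $t$ inner iterations of epoch $i$ (with $u^0 = \outerIterate_{i-1}$), dividing by $t$, and pushing the average inside $f$ by Jensen's inequality in the convex-concave variable shows that the averaged iterate satisfies $f((w_i^t)_x, w_y) - f(w_x, (w_i^t)_y) \le \|u^0 - w\|_2^2/(\PdhgStepSize \, t)$. Taking the supremum over $w \in \ball{R}{\outerIterate_{i-1}}$ and using $\|u^0 - w\|_2 \le R$ delivers the asserted $R^2/(\PdhgStepSize \, t)$ bound on the localized duality gap at $w_i^t$.

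\textbf{Step 3 (non-expansiveness).} Instantiate Step~1 at $w = w^{*}$. Convex-concavity of $f$ makes $g$ monotone, and the saddle-point first-order conditions give $g(w^{*})^T(v - w^{*}) \ge 0$; monotonicity then lifts this to $g(v)^T(v - w^{*}) \ge 0$, so Step~1 collapses to $\|u^{+} - w^{*}\|_2 \le \|u - w^{*}\|_2$. Iterating through the inner loop shows every iterate entering the running average is within $\|\outerIterate_{i-1} - w^{*}\|_2$ of $w^{*}$, and the triangle inequality applied to the convex combination gives $\|w_i^t - w^{*}\|_2 \le \|\outerIterate_{i-1} - w^{*}\|_2$.

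\textbf{Main obstacle.} The only delicate step is Step~1: the extragradient evaluates $g$ at $u$ to form $v$ and at $v$ to form $u^{+}$, producing a cross term $(g(v) - g(u))^T(v - u^{+})$ that would spoil the estimate if left alone. Controlling it by $L$-smoothness and Young's inequality, and paying for it exactly with the two negative quadratic terms produced by the three-point identities, is the only place the restriction $\PdhgStepSize L \le 1$ is invoked. The remaining steps are telescoping plus the saddle-point structure already exploited in the PDHG analysis.
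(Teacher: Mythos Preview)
Your proposal is correct and follows essentially the same approach as the paper: the paper obtains the one-step inequality by citing \cite[Theorem~4.4]{bubeck2015convex} while you spell out its proof via the three-point identity and the Lipschitz/Young absorption argument, and both of you then telescope, apply Jensen, and pass to the average. For non-expansiveness the paper plugs $w=w^{*}$ into the function-gap form \eqref{eq:bound-f-gap} and uses $f(\hat{x},y^{*})-f(x^{*},\hat{y})\ge 0$, whereas you plug $w=w^{*}$ into the operator form and use monotonicity of $g$ together with the saddle-point VI; these are the same inequality read in two equivalent ways, so there is no substantive difference.
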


\begin{proof}
Repeating the proof of
Theorem~4.4 of \cite{bubeck2015convex} with $\grad f(y_{t+1})$ replaced with $g(u_i^{t})$ yields
$$
g(u_i^t)^T ( u_{i}^t - w ) \le \frac{\| w - u_{i}^{t-1} \|_2^2 - \| w - u_{i}^{t} \|_2^2}{\gamma}. %
$$
From the previous inequality, using that $f$ is convex-concave we deduce
\begin{flalign}\label{eq:bound-f-gap}
f(\hat{x}_{i}^t, y) - f(x, \hat{y}_{i}^t) \le \frac{\| w - u_{i}^{t-1} \|_2^2 - \| w - u_{i}^{t} \|_2^2}{\gamma}
\end{flalign}
with $w = (x,y)$ and $u_i^t = (\hat{x}_i^t, \hat{y}_i^t)$.
By Jensen's inequality and telescoping, \eqref{eq:bound-f-gap} implies
$$
f(x_{i}^t, y) - f(x, y_{i}^t) \le \frac{1}{t} \sum_{k=1}^{t} (f(\hat{x}_{i}^k, y) - f(x, \hat{y}_{i}^k))
\le \frac{\| w_i^t - \outerIterate_{i-1} \|_2^2}{\gamma t}.
$$
Therefore the bound on the duality gap holds. Also, since for any saddle point $(x^{*}, y^{*}) = w^{*} \in W^{*}$ we have $f(x_{i}^t, y^{*}) - f(x^{*}, y_{i}^t) \ge 0$ by \eqref{eq:bound-f-gap} we deduce $\| w^{*} - u_{i}^{t} \|_2 \le \| w^{*} - u_{i}^{t-1} \|_2$. This implies $\| w^{*} - u_{i}^{t} \|_2 \le \| w^{*} - \outerIterate_{i-1} \|_2$ and therefore by the triangle inequality,
$$
\| w^{*} - w_{i}^{t} \|_2 \le \frac{1}{t} \sum_{k=1}^{t} \| w^{*} - u_{i}^{k} \|_2 \le \| w^{*} - \outerIterate_{i-1} \|_2.
$$
\end{proof}

\begin{theorem}\label{main-extragradient-theorem}
Suppose $f(x,y)$ is $L$-smooth, convex in $x$ for all $y \in Y$, and concave in $y$ for all $x \in X$. Further suppose Assumptions~\ref{assume:error-bound} holds. Let $\theta \in (0,L]$, $\PdhgStepSize \in (0, 1/L]$, $\beta \in (0,1)$, and $\epsilon \in (0, 1)$. Define
$t^{*} := \frac{ 4 (1 + \beta)^2}{\beta^2} \frac{1}{\gamma \theta} + 2$.
Consider the sequence $\{ \outerIterate_i \}_{i=0}^{\infty}$, $\{ \tau_i \}_{i=1}^{\infty}$ generated by \callAdaptiveRestartExtragradient{}. Then for
$$
n = \bigg\lceil \log_{1/\beta}\left( \frac{2}{\epsilon} \max\left\{ 1, \frac{ t^{*}}{\tau_1} \right\} \right) \bigg\rceil
$$
the inequality $\frac{\| W^{*} - \outerIterate_{n} \|}{\| W^{*} - \outerIterate_0 \|} \le \epsilon$ holds and 
$\sum_{i=1}^{n} \tau_i \le t^{*} n + 2 (\tau_1 - t^{*})^{+}$.
\end{theorem}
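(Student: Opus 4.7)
The plan is to mirror the proof of Theorem~\ref{main-pdhg-theorem} essentially verbatim, with Lemma~\ref{lem:extragradient-basic-result} in place of Lemma~\ref{lem:pdhg-basic-result}; the only substantive differences are the constant $C$ produced by the sublinear bound and the fact that the last-iterate contraction has $Q \equiv 1$ rather than $Q \equiv q$.

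First I would verify the four assumptions required by Theorem~\ref{thm:main-result}. Assumption~\ref{assume-Delta-r-marginal-gain} follows from Lemma~\ref{lem:Delta-r-marginal-gain} together with the convex-concave and continuity hypotheses on $f$. To obtain Assumption~\ref{assume:reduce-potential-function}, I apply Lemma~\ref{lem:extragradient-basic-result} with radius $R = (1+\beta) r$ where $r = \|w^t - \outerIterate\|$, combined with Fact~\ref{fact:saddle-point-conversion} to shift the ball center from $\outerIterate$ to $w^t$; this yields
\[
\Delta_{\beta r}(w^t) \le \frac{(1+\beta)^2 r^2}{\gamma t},
\]
so Assumption~\ref{assume:reduce-potential-function} holds with $\phi(t) = t$ and $C = 1/\gamma$. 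Assumption~\ref{assume:contraction} is the second conclusion of Lemma~\ref{lem:extragradient-basic-result}, which gives the constant function $Q(t) \equiv 1$. Assumption~\ref{assume:error-bound} is assumed in the hypothesis.

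Next I would identify $t^{*}$ from \eqref{define:t-star}. With $\phi(t) = t$, $Q \equiv 1$, and $\hat{\kappa} = C/\theta = 1/(\gamma \theta)$, the defining inequality becomes
\[
\frac{t^{*} - 2}{(1+1)^2} \ge \frac{(1+\beta)^2}{\beta^2} \cdot \frac{1}{\gamma \theta},
\]
which is satisfied with equality by $t^{*} = \frac{4(1+\beta)^2}{\beta^2 \gamma \theta} + 2$, matching the theorem statement. Similarly, the quantity $(1 + Q(\tau_1))/\epsilon = 2/\epsilon$ appearing in Theorem~\ref{thm:main-result} gives exactly the $n$ stated.

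Finally, invoking Theorem~\ref{thm:main-result} directly yields both $\|W^{*} - \outerIterate_n\|/\|W^{*} - \outerIterate_0\| \le \epsilon$ and $\sum_{i=1}^{n} \tau_i \le t^{*} n + 2(\tau_1 - t^{*})^{+}$. The proof is essentially a matter of bookkeeping; the only place to be careful is making sure the factor of $(1+\beta)^2$ coming from the ball expansion via Fact~\ref{fact:saddle-point-conversion} is absorbed correctly into the form required by Assumption~\ref{assume:reduce-potential-function}, and that the constant $C = 1/\gamma$ (without the factor of $1/2$ present in PDHG) is used consistently throughout.
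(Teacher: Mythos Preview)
Your proposal is correct and follows essentially the same route as the paper's own proof: verify Assumptions~\ref{assume-Delta-r-marginal-gain}--\ref{assume:error-bound} via Lemma~\ref{lem:Delta-r-marginal-gain}, Fact~\ref{fact:saddle-point-conversion} together with Lemma~\ref{lem:extragradient-basic-result}, and the contraction part of Lemma~\ref{lem:extragradient-basic-result} with $Q\equiv 1$, then invoke Theorem~\ref{thm:main-result}. Your identification $C = 1/\gamma$ is the value consistent with the stated $t^{*}$ (the paper's proof text writes $C = 2/\gamma$, which appears to be a typo).
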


\begin{proof}
By Lemma~\ref{lem:Delta-r-marginal-gain}, Assumption~\ref{assume-Delta-r-marginal-gain} holds.
By combining Fact~\ref{fact:saddle-point-conversion} and~\ref{lem:extragradient-basic-result} with $R = (1 + \beta) r$, we observe that Assumption~\ref{assume:reduce-potential-function} holds with $\phi(t) = t$ and $C = 2 / \gamma$.  Lemma~\ref{lem:extragradient-basic-result} implies Assumption~\ref{assume:contraction} holds with $Q(t) := 1$. Therefore we have established the premise of Theorem~\ref{thm:main-result} which implies the desired result.
\end{proof}

\subsection{Lower bounds for saddle point algorithms}\label{sec:lower-bounds}

With appropriate indexing of their iterates, saddle point algorithms such as primal-dual hybrid gradient, extragradient and their restarted variants satisfy
\begin{subequations}
\label{generic-saddle-point-algorithm}
\begin{flalign}
x_{t} &\in x_0 + \text{span}(\grad_x f(x_0, y_0), \dots, \grad_x f(x_{t-1}, y_{t-1})) \\
y_{t} &\in y_0 + \text{span}(\grad_y f(x_0, y_0), \dots, \grad_y f(x_{t}, y_{t})).
\end{flalign}
\end{subequations}
On bilinear games with $c = y_0 = x_0 = \mathbf{0}$, \eqref{generic-saddle-point-algorithm} simplifies to
\begin{flalign*}
y_{t} &\in \text{span}(A x_{0} - b, \dots, A x_{t-1} - b) \\
x_{t} &\in \text{span}(A^T y_{0}, \dots, A^T y_{t}),
\end{flalign*}
which implies
\begin{flalign}\label{span:xt}
x_{t} &\in \text{span}(A^T (A x_{0} - b), \dots, A^T (A x_{t-1} - b)).
\end{flalign}
Therefore to form a lower bound for saddle point algorithms it will suffice to consider algorithms that satisfy \eqref{span:xt}. We provide a lower bound for algorithms satisfying \eqref{span:xt} in Theorem~\ref{theorem:lb-saddle-point}. The proof of Theorem~\ref{theorem:lb-saddle-point} is essentially identical to the proof of \citet[Theorem~3.15]{bubeck2015convex}, the proof is just reframed in terms of a bilinear game instead of minimizing a quadratic.

\begin{theorem}\label{theorem:lb-saddle-point}
For any $0 < \gamma_{\min} < \gamma_{\max}$,
there exists a matrix $A$ and vector $b$ with $\gamma_{\max}$ and minimum singular value greater than $\gamma_{\min}$ such that for any black-box procedure satisfying \eqref{span:xt} with $x_0 = \mathbf{0}$, one has
$$
\| x_t - x^{*} \| \ge \left( \frac{\frac{\gamma_{\max}}{\gamma_{\min}} - 1}{\frac{\gamma_{\max}}{\gamma_{\min}} + 1} \right)^{t - 1} \| x_0 - x^{*} \|
$$
where $x^{*}$ is the unique primal solution to the saddle point problem $f(x,y) = y^T A x + b^T y$.
\end{theorem}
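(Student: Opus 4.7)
The plan is to reduce the claim to the classical Nesterov/Bubeck lower bound for first-order methods on strongly convex quadratics. With $x_0 = \mathbf{0}$, $M := A^{T} A$ and $c := -A^{T} b$, the oracle access \eqref{span:xt} becomes
$x_t \in \text{span}(M x_0 - c, \ldots, M x_{t-1} - c)$,
which is precisely the Krylov subspace generated by the gradients of $g(x) := \tfrac{1}{2} x^{T} M x - c^{T} x$ at $x_0, \ldots, x_{t-1}$. Since the unique primal saddle satisfies $A x^* + b = 0$, i.e.\ $M x^* = c$, $x^*$ is the unique minimizer of $g$; and because the singular values of $A$ lie in $[\gamma_{\min},\gamma_{\max}]$, $g$ is $\gamma_{\min}^{2}$-strongly convex and $\gamma_{\max}^{2}$-smooth, so its condition number is $\kappa^{2}$ where $\kappa := \gamma_{\max}/\gamma_{\min}$. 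This immediately explains why the rate in the statement is the ``non-accelerated'' $\frac{\kappa-1}{\kappa+1}$ rather than $\frac{\kappa-1}{\kappa+1}$ applied with $\sqrt{\kappa}$: it is the ordinary gradient-Krylov rate on a problem whose condition number is $\kappa^{2}$.

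\paragraph{Construction of the hard instance.}
Next I would instantiate $M$ as the standard ``Nesterov'' tridiagonal matrix $T \in \R^{n \times n}$ (for $n$ sufficiently large compared to $t$), shifted and rescaled so that its extreme eigenvalues are exactly $\gamma_{\min}^{2}$ and $\gamma_{\max}^{2}$; any factor $A$ with $A^{T}A = T$ (for instance $A := T^{1/2}$) then has maximum singular value $\gamma_{\max}$ and minimum singular value $\gamma_{\min}$. Choosing $b := -A T^{-1} e_1$ gives $-A^{T}b = e_1$, hence $c = e_1$ and $x^* = T^{-1} e_1$.

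\paragraph{Main argument.}
The remainder follows the proof of \citet[Theorem~3.15]{bubeck2015convex} essentially verbatim. There are two ingredients. (i) A sparsity induction: since $T$ is tridiagonal and $c = e_1$, one shows by induction on $t$ that each iterate produced by the recursion $x_t \in \text{span}(M x_0 - c, \ldots, M x_{t-1} - c)$ satisfies $(x_t)_j = 0$ for all $j > t$, so $\|x_t - x^*\|^{2} \ge \sum_{j > t} (x^*_j)^{2}$. (ii) A closed-form computation showing that the coordinates of $x^* = T^{-1} e_1$ decay geometrically at ratio $\frac{\kappa - 1}{\kappa + 1}$; this is the standard Chebyshev-polynomial / second-order linear-recurrence calculation for the Nesterov matrix, tuned so that the characteristic roots give exactly this ratio. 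Combining (i) and (ii) and comparing the tail of $\|x^*\|^{2}$ to its total norm produces the claimed bound, with the ``$t-1$'' in the exponent reflecting that $x_0 = \mathbf{0}$ contributes one coordinate's worth of ``free'' agreement with $x^*$.

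\paragraph{Main obstacle.}
The only non-trivial step is the explicit geometric decay of the coefficients of $T^{-1} e_1$ and the verification that the ratio between consecutive coordinates is exactly $\frac{\kappa - 1}{\kappa + 1}$. This requires solving the two-term recurrence associated with the tridiagonal $T$, identifying its characteristic equation, and picking the rescaling of $T$ so that the two roots of that equation are $\frac{\kappa - 1}{\kappa + 1}$ and its reciprocal. Everything else---the reduction to the quadratic $g$, the sparsity induction, and the tail-sum lower bound---is bookkeeping, which is why the authors are content to simply invoke \cite{bubeck2015convex}.
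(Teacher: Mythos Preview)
Your proposal is correct and essentially coincides with the paper's proof: both take $A^{T}A$ to be an affine rescaling of the standard tridiagonal Nesterov matrix so that its spectrum sits in $[\gamma_{\min}^{2},\gamma_{\max}^{2}]$, choose $b$ so that $A^{T}b$ is a multiple of $e_{1}$, and then invoke \citet[Theorem~3.15]{bubeck2015convex} with condition number $\kappa = \gamma_{\max}^{2}/\gamma_{\min}^{2}$. Your write-up in fact spells out the two ingredients of that cited proof (the tridiagonal sparsity induction and the geometric decay of $T^{-1}e_{1}$) in more detail than the paper does.
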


\begin{proof}
Let $\mathbf{T} \in \R^{k \times k}$ be a tridiagonal matrix with $2$ on the diagonal and $-1$ on the upper and lower diagonals. Note that $x^T \mathbf{T} x = 2 \sum_{i=1}^{k} x(i)^2 - 2 \sum_{i=1}^{k-1} x(i) x(i+1) = x(1)^2 + x(k)^2 + \sum_{i=1}^{k-1} (x(i) - x(i+1))^2$, which implies $\mathbf{0} \preceq	 \mathbf{T} \preceq	 4 \mathbf{I}$. Define,
$A := \sqrt{\frac{\gamma_{\max}^2 - \gamma_{\min}^2}{4} \mathbf{T} + \gamma_{\min}^2 \mathbf{I}}, \quad b := 2 (A^T)^{-1} e_1$.
Since
$A^T A = A^2 = \frac{\gamma_{\max}^2 - \gamma_{\min}^2}{4} \mathbf{T} + \gamma_{\min}^2 \mathbf{I}$,
we deduce that the minimum singular value of $A$ is greater than $\sqrt{\lambda_{\min}(A^T A)} \ge \gamma_{\min}$, and the maximum singular value of $A$ is less than $\sqrt{\lambda_{\max}(A^T A)} \le \gamma_{\max}$. The remainder of the proof continues exactly as the proof of \citet[Theorem~3.15]{bubeck2015convex} except with $\kappa = \gamma_{\max}^2 / \gamma_{\min}^2$ and $\alpha = \gamma_{\min}^2$. 
\end{proof}

Note, for $\gamma_{\max} / \gamma_{\min} \gg 1$ we have
$$
\left( \frac{\frac{\gamma_{\max}}{\gamma_{\min}} - 1}{\frac{\gamma_{\max}}{\gamma_{\min}} + 1} \right)^{t - 1} \approx \exp\left( -2 (t - 1) \frac{ \gamma_{\max}}{\gamma_{\min}} \right).
$$

\section{More experimental details}\label{app:more-experimental-details}

\subsection{Matrix games}

The step size used for the matrix games is $\PdhgStepSize = \frac{\sqrt{0.9}}{||A||_2}$ where the operator norm $||A||_2$ is computed using \texttt{numpy.linalg.norm}. The constant $\sqrt{0.9}$ was taken from ODL and was not tuned. $\beta = \frac{1}{2}$ is used in the restart scheme.

\subsection{Quadratic assignment problem relaxations}
For each instance we first fix the ratio of primal and dual step sizes by a hyperparameter sweep over $\{ 10^{-5}, 10^{-4}, 10^{-3}, 10^{-2}, 10^{-1}, 10^{0}, 10^{1}, 10^{2}, 10^{3}, 10^{4}, 10^{5}\}$. We run PDHG without restarts for 1000 iterations and select the ratio $r$ such that the final iterate has the smallest residual. Given $r$, the dual step size is chosen as $\PdhgStepSize_y = \sqrt{\frac{0.9}{r}} \frac{1}{||A||_2}$ and  the primal step size chosen as $\PdhgStepSize_x = r\PdhgStepSize_y$. The operator norm $||A||_2$ is estimated using ODL's implementation of power iteration. $\beta = \frac{1}{2}$ is used in the restart scheme. Table~\ref{qap:dataset} lists the dimensions of the two instances.

\begin{table}[htbp]
    \centering
    \begin{tabular}{c|c|c|c}
      Instance & \# variables & \# constraints & \# nonzeros in $A$ \\
      \hline
     \texttt{nug08-3rd} & 20,448 & 19,728 & 139,008 \\
     \texttt{qap15} & 22,275 & 6,331 & 110,700 \\
    \end{tabular}
    \caption{Dimensions of the quadratic assignment problem relaxations.}
    \label{qap:dataset}
\end{table}

\subsection{Logistic regression and LASSO}

Given runs with different fixed periods, we pick the algorithm that gets the function value below $10^{-8}$ earliest (in terms of number of iterations) as the best. Table~\ref{agd:dataset} lists statistics of the instances.

\begin{table}[htbp]
    \centering
    \begin{tabular}{c|c|c|c|c}
      Dataset  & Problem & \# data points & \# features & regularizer \\
      \hline
     E2006-tfidf \cite{kogan2009predicting} & LASSO & 16,087 & 150,360 & 1.0 \\
     rcv1.binary \cite{lewis2004rcv1} & regularized logistic loss & 20,242 & 47,236 & $10^{-1}$ \\
     Duke breast cancer \cite{west2001predicting,shevade2003simple} & regularized logistic loss & 44 & 7,129 & $10^{-2}$  \\
    \end{tabular}
    \caption{Statistics of test problems for AGD.}
    \label{agd:dataset}
\end{table}

\subsection{A hard example for the function scheme of O'Donoghue and Candes \cite{o2015adaptive}}
Consider the following $1$-smooth function:
$$
h_{\delta}(\eta) = \begin{cases}
\eta^2 / 2 & \eta \ge -\delta \\
-\delta \eta - \delta^2 / 2 & \eta < -\delta.
\end{cases}
$$
which is plotted in Figure~\ref{figure:plot-h} for $\delta = 0.1$.
Using this function we will construct a hard example for AGD,
\begin{flalign}\label{eq:hard-example}
f(x) = \sum_{i=1}^{n} i h_{\delta}(x_i) + \frac{\alpha}{2} \| x \|_2^2
\end{flalign}
with $n = 500$, $\delta = 10^{-4}$, $\alpha = 10^{-4}$ starting from $x = -\mathbf{1}$.
Note this problem is $n$-smooth and $\alpha$-strongly convex. The unique minimizer is $x = \mathbf{0}$.
In this situation the restart interval that minimizes the worst-case bound is $\approx \e \sqrt{n / \alpha} \approx 6000$. From Figure~\ref{figure:hard-example}, we can see that our method and hyper-parameter searching on the restart period produces restarts of roughly this magnitude. 
The function restart scheme of O'Donoghue and Candes \cite{o2015adaptive} in contrast restarts too frequently, causing the algorithm to run slower than vanilla AGD.
Intuitively, the sharp transition in the smoothness at $-\delta$ causes the function value to regularly increase and stopping the scheme of O'Donoghue and Candes \cite{o2015adaptive} from building up momentum. However, once it enters the neighborhood of the minimizer where the function is quadratic it quickly converges.

\begin{figure}[ht]
    \centering
   \includegraphics[scale=0.3]{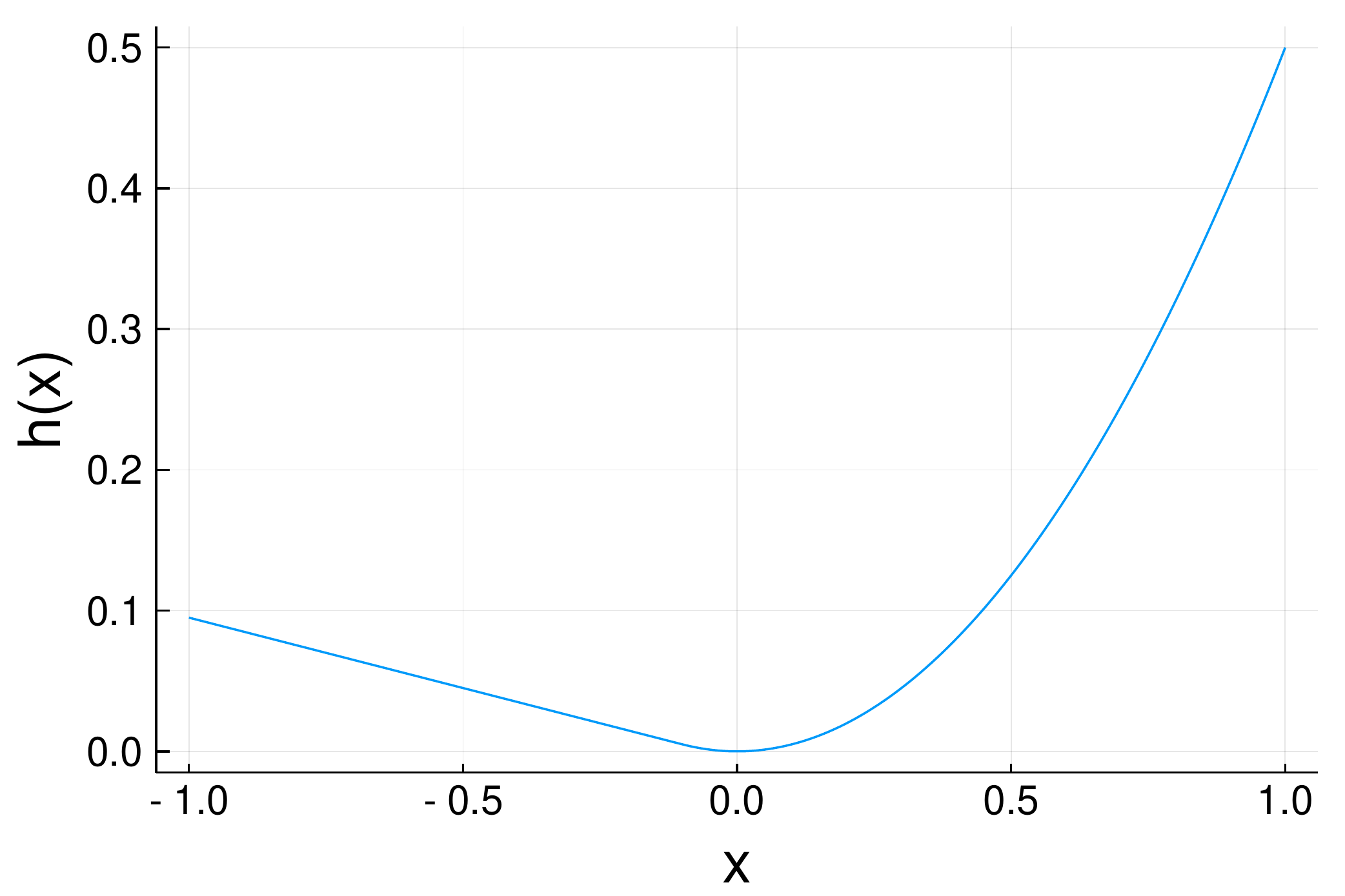}
    \caption{Plot of the function $h_{\delta}$ for $\delta=0.1$}
    \label{figure:plot-h}
\end{figure}

\begin{figure}[ht]
\centering 
\includegraphics[scale=0.3]{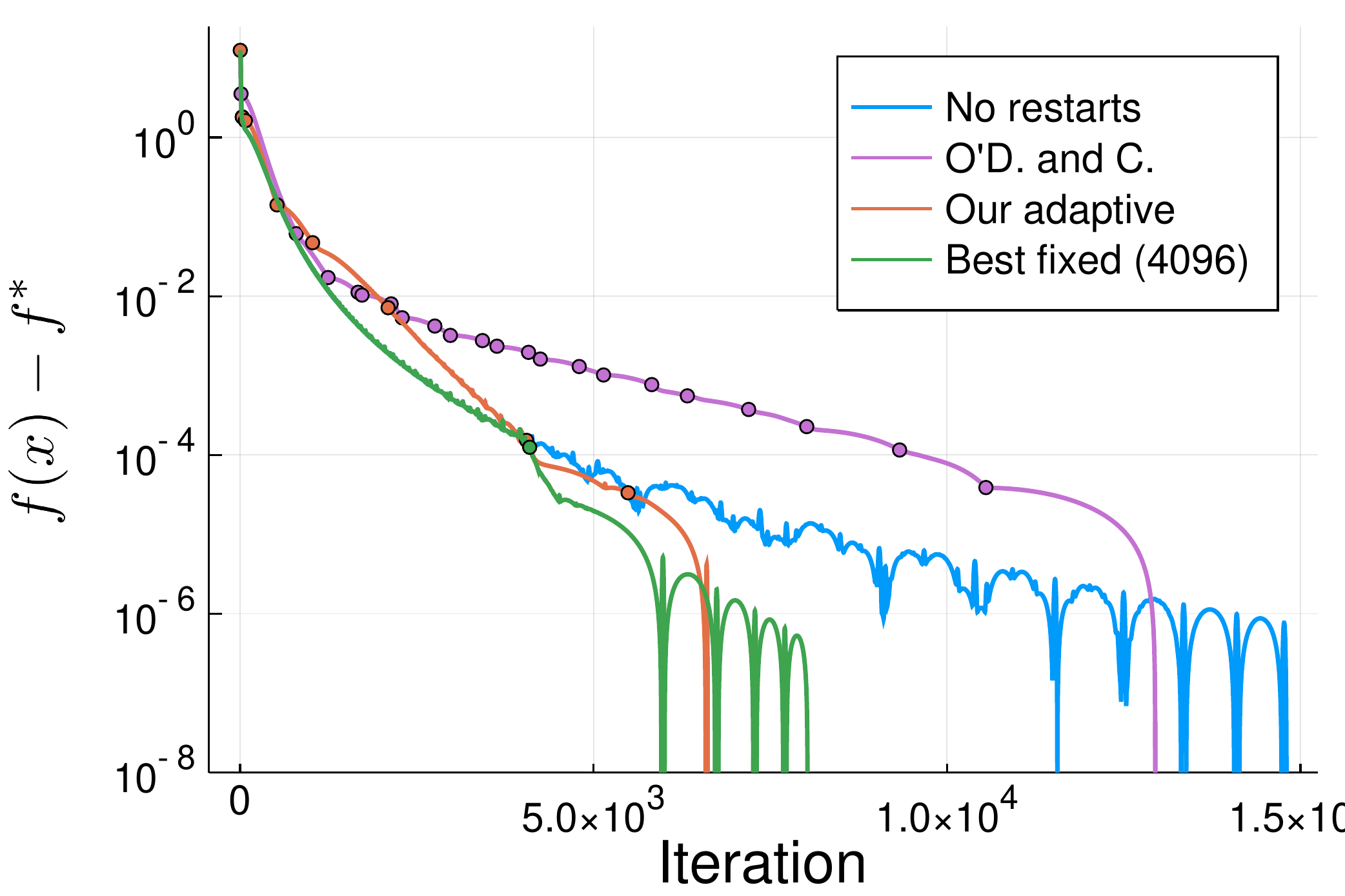}
\caption{Results for minimizing the function \eqref{eq:hard-example}.  The best fixed restart period is found via grid search on $\{ 128, 256, 512, 1024, 2048, 4096, 8192 \}$.}\label{figure:hard-example}
\end{figure}

\end{document}